\DeclareMathOperator{\CAT}{CAT}
\newtheorem{thm}{Theorem}[section]
\newtheorem{theorem}[thm]{Theorem}
\newtheorem{lem}[thm]{Lemma}
\newtheorem{lemma}[thm]{Lemma}
\newtheorem{prop}[thm]{Proposition}
\newtheorem{cor}[thm]{Corollary}
\newtheorem{conj}[thm]{Conjecture}
\newtheorem*{thmex}{Theorem \ref{thm:excessive}}
\newtheorem*{corfree}{Corollary \ref{cor:freebyfree}} 
\newtheorem*{thmcube}{Theorem \ref{thm:cube}} 
\newtheorem*{thmf2}{Theorem \ref{thm:f2fn}}
\newtheorem*{thmfiber}{Theorem \ref{thm:strongfiber}}
\theoremstyle{definition}
\newtheorem{definition}[thm]{Definition}
\theoremstyle{remark}
\newtheorem*{prop proof}{Proof of Proposition}
\newcommand{\RR}{\mathbb{R}}      
\newcommand{\ZZ}{\mathbb{Z}}        
\newcommand{\QQ}{\mathbb{Q}}      
\newcommand\restr[2]{{
		\left.\kern-\nulldelimiterspace 
		#1 
		\vphantom{\big|} 
		\right|_{#2} 
}}
\newcommand{\Aut}{\mathrm{Aut}} 
\newcommand{\Out}{\mathrm{Out}}
\newcommand{\Isom}{\mathrm{Isom}}
\numberwithin{equation}{section}
\author{Robert Kropholler and Genevieve Walsh}
\date{}
\title{Incoherence and fibering of many free-by-free groups} 
\begin{document}
	\maketitle
	\begin{abstract}
		We show that free-by-free groups satisfying a homological criterion, which we call excessive homology, are incoherent. This class is large in nature, including many examples of hyperbolic and non-hyperbolic free-by-free groups. We apply this criterion to finite index subgroups of $F_2\rtimes F_n$ to show incoherence of all such groups, and to other similar classes of groups. Furthermore, we show that a large class of groups, including free-by-free, surface-by-surface, and finitely generated-by-RAAG,  algebraically fiber if they have excessive homology.  
	\end{abstract}
	
	\section{Introduction}\label{sec:intro}
	
	We begin with a definition. 
	
	\begin{definition} A group is {\it coherent} if every finitely generated subgroup is finitely presented.  \end{definition} 
	A group is $G$ is called {\it incoherent} if $G$ has a  finitely generated subgroup $H$ which is not finitely presented, and we call such a subgroup a {\it witness to incoherence}. There are many examples of incoherent groups.  For example, $F_2 \times F_2$ is well known to be incoherent. A construction of many incoherent groups is given by Rips \cite{Ripsconstruction}. Here we present some substantial evidence towards the following conjecture, which was independently and previously made by Dani Wise: 
	
	\begin{conj} Let $ G = F_m \rtimes F_n$, where $m, n \geq 2$.  Then $G$ is incoherent. \end{conj}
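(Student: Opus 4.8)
The plan is to reduce the conjecture to a \emph{virtual algebraic fibering} statement and combine it with an Euler characteristic argument. First, incoherence is inherited by overgroups: if $H\le G'\le G$ and $H$ is finitely generated but not finitely presented, then $H$ is a witness to incoherence of $G$ as well. Second, for any embedding $F_2\hookrightarrow F_n$ the restriction of the monodromy gives an embedding $F_m\rtimes F_2\hookrightarrow F_m\rtimes F_n$, so it suffices to treat $n=2$; the fiber rank $m$ cannot be lowered in the same way, since a generic monodromy preserves no proper free subgroup of $F_m$, which is exactly why the case $F_2\rtimes F_n$ settled here does not already imply the general conjecture. Note also that a finite-index subgroup of $F_m\rtimes F_n$ (with $m,n\ge 2$) is again free-by-free with both ranks at least $2$, so the excessive-homology criterion for free-by-free groups applies to all such subgroups.

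The Euler characteristic engine is the following. With $m,n\ge 2$ the group $G=F_m\rtimes F_n$ has a $2$-dimensional $K(G,1)$, so $\operatorname{cd}(G)=2$, and $\chi(G)=\chi(F_m)\chi(F_n)=(m-1)(n-1)>0$; hence every finite-index $G'\le G$ has $\chi(G')=[G:G']\,(m-1)(n-1)>0$. Suppose $G'\twoheadrightarrow\mathbb{Z}$ has finitely generated kernel $K$. Then $\operatorname{cd}_{\mathbb{Q}}(K)\le 2$, so the second syzygy of a partial finitely generated projective $\mathbb{Q}K$-resolution of $\mathbb{Q}$ is finitely generated (as $K$ is $FP_2$) and projective (as $\operatorname{cd}_{\mathbb{Q}}(K)\le 2$); thus if $K$ were of type $FP_2$ over $\mathbb{Q}$ it would be of type $FP$ over $\mathbb{Q}$, and $\chi$ would be multiplicative in $1\to K\to G'\to\mathbb{Z}\to 1$, forcing $\chi(G')=\chi(\mathbb{Z})\chi(K)=0$, a contradiction. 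Therefore $K$ is finitely generated but not of type $FP_2$, in particular not finitely presented, and $K$ is the sought witness. This is in essence the content of the excessive-homology theorem together with the algebraic fibering theorem: excessive homology is the homological hypothesis that forces the fibration, while the positive Euler characteristic obstructs $FP_2$ of the fiber group. So the conjecture reduces to: every $F_m\rtimes F_2$ (equivalently every $F_m\rtimes F_n$) with $m\ge 2$ virtually algebraically fibers.

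To produce the fibration I would verify excessive homology after passing to a suitable finite-index subgroup $G'\cong F_k\rtimes_\psi F_\ell$ (with $k$ typically much larger than $m$). A character $\chi\colon G'\to\mathbb{Z}$ is well defined precisely when its restriction to the fiber $F_k$ is invariant under the monodromy, i.e.\ factors through the coinvariants $H_1(F_k;\mathbb{Q})_{\psi(F_\ell)}$; since $H_2(F_\ell;\mathbb{Q})=0$ the five-term sequence gives $b_1(G')=\ell+\dim_{\mathbb{Q}}H_1(F_k;\mathbb{Q})_{\psi(F_\ell)}$, so excessive homology — $b_1(G')$ exceeding the base contribution $\ell$ by enough — is exactly what provides a character nonzero on the fiber. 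Moreover such a $\chi$ \emph{must} be nonzero on the fiber, since otherwise it factors through $F_\ell$ and its kernel is an extension of an infinite-rank free group by $F_k$, hence not finitely generated. The extra homology is extracted from the representation theory of the finite quotient used to build the cover: the trivial isotypic summand of $H_1$ of a finite-index subgroup of the fiber (transfer) and isotypic summands on which the monodromy acts with an invariant functional. Alternatively, in the Gromov-hyperbolic case one may instead invoke Agol's theorem (such groups are virtually special, hence virtually RFRS) together with Kielak's criterion, reducing virtual algebraic fibering to the vanishing of $b_1^{(2)}(F_m\rtimes F_n)$.

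The main obstacle is a genuine tension in the choice of finite cover. Having found a monodromy-invariant character that is nonzero on the fiber, one still needs its kernel to be finitely generated, which requires the monodromy to act with enough mixing on the relevant subgroup of the fiber — here generic, near-irreducible monodromy \emph{helps}. But producing the invariant character in the first place requires the monodromy to act \emph{reducibly} enough on first homology to have an invariant functional, which generic monodromy obstructs at every level until one passes deep into a tower of covers. Reconciling these — equivalently, controlling the homology growth of all finite-index subgroups of an arbitrary free-by-free group, or establishing $b_1^{(2)}(F_m\rtimes F_n)=0$ together with virtual RFRS-ness (itself unknown in the non-hyperbolic case), or building the witness by a direct combinatorial argument on a square-complex model that bypasses homology altogether — is the heart of the difficulty and the reason the conjecture remains open beyond $m=2$.
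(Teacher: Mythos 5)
You have not proved the statement, and you say as much in your last sentence; to be fair, the statement is a conjecture that the paper itself leaves open, proving it only under the excessive-homology hypothesis (Corollary \ref{cor:freebyfree}) and for $m=2$ (Theorem \ref{thm:f2fn}). So the honest verdict is: your proposal is a correct reduction of the conjecture to virtual algebraic fibering, not a proof. The concrete missing step is exactly the one you identify at the end — producing, for an arbitrary monodromy $F_n\to\Out(F_m)$ with $m\ge 3$, a finite-index subgroup with excessive homology (equivalently, by the paper's Theorem on virtual fibering, a virtual algebraic fibration). The paper closes this gap only for $m=2$, by an explicit computation: it passes to the index-$3$ subgroup of $F_2$ generated by $a, b^2, bab^{-1}$, computes the matrices $\Phi-I$ for generators of the index-$3$ stabilizer in $\Out(F_2)$, and checks that the coinvariants of $H_1$ are nonzero. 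Nothing like this is known for general $m$, and your closing paragraph correctly diagnoses why.

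Where your route genuinely differs from the paper's is in how a witness to incoherence is extracted once (virtual) excessive homology is in hand. You use the Euler-characteristic/Bieri argument: $\chi(G)=(m-1)(n-1)>0$ and $\operatorname{cd}(G)=2$ force the finitely generated kernel of any algebraic fibration of a finite-index subgroup to fail $FP_2$. The paper mentions this route only in passing (citing Bieri together with Theorem \ref{thm:vf}); its main construction (Theorem \ref{thm:incohwithhomology}) instead perturbs the two coordinate characters $\alpha_s,\alpha_t$ by a small rational multiple of the extra class $\gamma$, uses openness of the BNS invariant to get finitely generated kernels $K_1, K_2$ in $H\rtimes\langle s\rangle$ and $H\rtimes\langle t\rangle$, and exhibits $\langle K_1,K_2\rangle=K_1\ast_L K_2$ with $L=\ker(\gamma|_H)$ infinitely generated as the non-finitely-presented subgroup, via B.~Neumann's theorem. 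Your route is shorter when it applies but is specific to $\operatorname{cd}=2$ and $\chi\neq 0$; the paper's amalgam construction works for any finitely generated $H$ that does not algebraically fiber (surface groups, $BS(1,n)$, etc.), which is why it is the engine behind the more general Theorem \ref{thm:excessive}. Your homological bookkeeping (the five-term sequence giving $b_1(G')=\ell+\dim H_1(F_k;\QQ)_{\psi(F_\ell)}$, and the observation that a character vanishing on the fiber has infinitely generated kernel) agrees with Lemma \ref{lem:homologyoffreebyfree} and the paper's virtual-fibering equivalence.
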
 
	
	In this paper we show that this conjecture is true when $G = F_m \rtimes F_n$  and $rk(H^1(G; \RR)) \geq n+1$ in Theorem \ref{thm:excessive} and for $G = F_2 \rtimes F_n$ in Theorem \ref{thm:f2fn}. Note that in contrast, $F_m \rtimes \mathbb{Z}$ is always coherent \cite{FeignHandel}.  
	
	The techniques of this paper are inspired by two examples. The first is Bowditch and Mess's example of an incoherent hyperbolic 4-manifold group, \cite{BowditchMess}. To construct this example one starts with a closed 3-manifold $M$ which contains a totally geodesic surface $S_g$, such that $M$ is also fibered with fiber $F$. They then consider the following space $M\cup_{S_g}M$. This space is homotopy equivalent to a compact quotient of a convex subset of $\mathbb{H}^4$ by a subgroup of $\Isom(\mathbb{H}^4)$.  This is a higher-dimensional analog of gluing two closed surfaces together along a geodesic and thickening to get a convex co-compact Kleinian manifold. They show, among other things, that the resulting hyperbolic 4-manifold has incoherent fundamental group. The witness to incoherence is obtained by taking the subgroup generated by the two fibers. This subgroup can be written as an amalgamated free product as $F\ast_{F\cap S_g}F$. 
	
	The second example we present here is a proof that $F_2\times F_2$ is incoherent. We can write $F_2\times F_2$ as an amalgamated free product as $(F_2\times \ZZ)\ast_{F_2}(F_2\times \ZZ) = \langle a, b, s\rangle\ast_{\langle a, b\rangle}\langle a, b, t\rangle$. We can take a non-standard fibration of each side and get fibers $\langle as^{-1}, bs^{-1}\rangle$ and $\langle at^{-1}, bt^{-1}\rangle$. The union of these two fibers gives a witness to incoherence which can be written as an amalgamated free product $\langle as^{-1}, bs^{-1}\rangle \ast_{\langle a, b\rangle\cap \langle at^{-1}, bt^{-1}\rangle}\langle at^{-1}, bt^{-1}\rangle$. 
	
	The analogs to this construction here are Corollaries \ref{cor:fibering} and \ref{cor:freebyfree}.  
	
	One should note that in both examples we have ommited a key detail. Namely, that the amalgamating subgroups $F\cap S_g$ and $\langle a, b\rangle\cap \langle at^{-1}, bt^{-1}\rangle$ are not finitely generated. This is because, as discussed later, free groups and surface groups do not algebraically fiber. 
	
	Our results are quite general and apply to constructions involving finitely generated groups which do not algebraically fiber.  Most of our results follow from our main theorem.  If $G =  H \rtimes F_k$, then we say $G$ has {\em excessive homology} if $rk(H^1(G; \mathbb{R})) \geq k+1$. 
	
	\begin{thmex}  Let $G = H \rtimes F_k$, where $H$ is finitely generated and does not algebraically fiber and $k\geq 2$.   If $G$ has excessive homology, then $G$ is incoherent. \end{thmex}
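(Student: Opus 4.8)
The plan is to build a witness to incoherence inside $G' = H\rtimes F_2\le G$ (for a rank-two free factor of $F_k$) as an amalgamated free product of two finitely generated subgroups whose intersection is an infinitely generated subgroup of $H$, mirroring the $F_2\times F_2$ computation from the introduction. The first task is to extract the relevant homomorphism from excessive homology: the inflation--restriction (five-term) exact sequence of $1\to H\to G\to F_k\to 1$ with $\mathbb{R}$-coefficients reads
\[
0 \longrightarrow H^1(F_k;\mathbb{R}) \longrightarrow H^1(G;\mathbb{R}) \longrightarrow H^1(H;\mathbb{R})^{F_k} \longrightarrow H^2(F_k;\mathbb{R}) = 0,
\]
so that $rk(H^1(G;\mathbb{R})) = k + \dim H^1(H;\mathbb{R})^{F_k}$. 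Excessive homology gives $\dim H^1(H;\mathbb{R})^{F_k}\ge 1$, and since $H^1(H;\mathbb{Q})$ is finite dimensional, taking $F_k$-invariants commutes with $-\otimes_{\mathbb{Q}}\mathbb{R}$, so $H^1(H;\mathbb{Q})^{F_k}\ne 0$. Clearing denominators and dividing by the content yields a surjection $\phi\colon H\to\mathbb{Z}$ with $\phi(g^{-1}hg) = \phi(h)$ for all $g\in G$ and $h\in H$. Because $H$ does not algebraically fibre, $L := \ker\phi$ is infinitely generated; this is the group that will be the amalgamating subgroup of the witness.

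Next I produce, on each of two $\mathbb{Z}$-factors, an algebraic fibration whose kernel is transverse to the fibre $H$. Fix a free basis $t_1,\dots,t_k$ of $F_k$ and set $A_i = H\rtimes\langle t_i\rangle\cong H\rtimes_{\alpha_i}\mathbb{Z}$ for $i=1,2$, so that $G' := \langle A_1,A_2\rangle = H\rtimes\langle t_1,t_2\rangle$ splits as $A_1\ast_H A_2$. Invariance of $\phi$ makes $\phi_i\colon A_i\to\mathbb{Z}$, defined by $\phi_i|_H=\phi$ and $\phi_i(t_i)=0$, a homomorphism. Let $\chi_i^0\colon A_i\to\mathbb{Z}$ be the projection with kernel $H$; since $H$ is finitely generated, both rays $\pm[\chi_i^0]$ lie in the BNS invariant $\Sigma^1(A_i)$. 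As $\Sigma^1$ is open and $[\phi_i+N\chi_i^0]\to[\chi_i^0]$ (and $[-\phi_i-N\chi_i^0]\to[-\chi_i^0]$) as $N\to\infty$, for $N$ large both $\pm[\phi_i+N\chi_i^0]$ lie in $\Sigma^1(A_i)$, so the surjection $\chi_i^N := \phi_i+N\chi_i^0\colon A_i\to\mathbb{Z}$ has finitely generated kernel. Put $A := \ker\chi_1^N\le A_1$ and $B := \ker\chi_2^N\le A_2$: both are finitely generated, and $A\cap H = \ker\phi = L = B\cap H$.

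Finally I assemble the witness $W := \langle A,B\rangle\le G'$. Given a reduced word $x_1\cdots x_n$ in the abstract amalgam $A\ast_L B$, each syllable from $A\setminus L$ lies in $A_1\setminus H$ and each from $B\setminus L$ lies in $A_2\setminus H$, hence the word is reduced in $A_1\ast_H A_2$ and so nontrivial in $G'$; therefore $W\cong A\ast_L B$, with $A\cap B = L$. Then $W$ is finitely generated (by generating sets of $A$ and $B$); but since it splits as an amalgam over the infinitely generated edge group $L$, it is not of type $\mathrm{FP}_2$ by Bieri's $\mathrm{FP}_n$-criterion for amalgamated products, hence not finitely presented. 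Thus $W$ witnesses the incoherence of $G'\le G$, and $G$ is incoherent.

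I expect the crux to be the $\Sigma^1$-openness step: this is exactly where the argument uses that every $H$-by-$\mathbb{Z}$ group algebraically fibres (with fibre $H$) even though $H$ itself does not, while the extra invariant class $\phi$ furnished by excessive homology lets us perturb the fibration so that its kernel meets $H$ in the infinitely generated group $\ker\phi$. A secondary point requiring care is the normal-form verification that $\langle A,B\rangle$ is the \emph{full} amalgam $A\ast_L B$ and not a proper quotient of it --- it is precisely this that turns the infinite generation of $L$ into an obstruction to the finite presentability of $W$.
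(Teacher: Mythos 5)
Your proof is correct and follows essentially the same route as the paper: restrict to $H\rtimes F_2$, use the extra cohomology class to perturb the two obvious fibrations $H\rtimes\langle t_i\rangle\to\ZZ$ via openness of the BNS invariant, and amalgamate the two finitely generated kernels over the infinitely generated subgroup $\ker(\phi)\le H$ to get a witness that is not finitely presented. Your inflation--restriction derivation of the invariant class and your normal-form check that $\langle A,B\rangle\cong A\ast_L B$ are just more explicit versions of steps the paper handles via Lemma \ref{lem:homologyoffreebyfree} and Lemma \ref{lem:construction}.
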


	Some consequences are as follows.  
	\begin{corfree}  Let $G = F_m \rtimes F_n $. If $G$ has excessive homology, then $G$ is incoherent. \end{corfree}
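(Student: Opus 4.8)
The plan is to obtain this as an immediate specialization of Theorem~\ref{thm:excessive}, taking the fiber group to be $H = F_m$ and the base group to be $F_k = F_n$. Two hypotheses of that theorem must be checked. First, $H = F_m$ is finitely generated, which is trivial. Second, $F_m$ must not algebraically fiber, i.e.\ it must admit no epimorphism onto $\ZZ$ with finitely generated kernel. (Implicitly we also want $m, n \geq 2$; this is no real loss, since in the conjecture $m, n \geq 2$ throughout, and for $m = 1$ the fiber $\ZZ$ does algebraically fiber, so $F_m$ is a legitimate instance of the theorem's hypothesis precisely when $m \geq 2$ --- we read ``$F_m$'' in the statement accordingly.)

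The only substantive point is therefore the non-fibering of $F_m$ for $m \geq 2$, and I would deduce it from the classical structure theory of subgroups of free groups: a nontrivial finitely generated normal subgroup of a free group has finite index (Karrass--Solitar / Greenberg; alternatively one can argue via Stallings' theorem on ends, or directly from covering space theory of graphs). Given any epimorphism $\phi\colon F_m \twoheadrightarrow \ZZ$, the kernel $N = \ker\phi$ is normal of infinite index in $F_m$, and since $m \geq 2$ we have $F_m \not\cong \ZZ$, so $N \neq 1$. Hence $N$ is infinitely generated, and $F_m$ does not algebraically fiber.

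With both hypotheses verified, Theorem~\ref{thm:excessive} applies verbatim: if $G = F_m \rtimes F_n$ has excessive homology, that is $rk(H^1(G;\RR)) \geq n+1$, then $G$ is incoherent, and in fact the proof of the main theorem supplies an explicit finitely generated, non-finitely-presented witness subgroup (of amalgamated-product type, mirroring the $F_2\times F_2$ example in the introduction). Since the corollary is purely a matter of instantiating the main theorem, there is no genuine obstacle beyond the non-fibering fact recalled above; the only thing to be careful about is the convention that ``$F_m$'' carries $m \geq 2$, so that $F_m$ really is an admissible choice of finitely generated non-fibering group $H$.
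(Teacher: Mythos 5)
Your proposal is correct and matches the paper's (implicit) argument: the corollary is stated in the paper as an immediate instance of Theorem~\ref{thm:excessive}, using the fact---recorded in Section~\ref{sec:fiber}---that $F_m$ for $m\geq 2$ does not algebraically fiber. Your justification of that non-fibering fact via the Karrass--Solitar/Greenberg theorem on finitely generated normal subgroups of free groups is a correct filling-in of a detail the paper takes as standard.
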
 
	A group is {\it virtually special} if it virtually acts co-specially on a $\CAT(0)$ cube complex. 
	\begin{thmcube} Let $G = H \rtimes F_n$, $n \geq 2$  where $H$ is either a closed hyperbolic surface group or a free group of rank $\geq 2$.   If $G$ is hyperbolic and virtually special,  then $G$ is incoherent. \end{thmcube}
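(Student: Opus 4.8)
The plan is to deduce the theorem from Theorem~\ref{thm:excessive}; it suffices to prove that some finite-index subgroup of $G$ is incoherent. Since $G$ is virtually special, I may replace $G$ by a finite-index subgroup and assume that $G$ embeds in a right-angled Artin group. Such groups are RFRS and the RFRS property passes to subgroups, so $G$ is RFRS; moreover $G$ is still of the form $H\rtimes F_{k}$ with $H$ a closed hyperbolic surface group or a free group of rank $\ge 2$ and $k\ge 2$, since any finite-index subgroup of $H\rtimes F_{n}$ meets $H$ in a finite-index subgroup, which is again of the same type, and maps onto a finite-index subgroup of $F_{n}$, which is free of rank $\ge n\ge 2$; the extension splits because the quotient is free. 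The goal is then to find a finite-index subgroup of $G$ with excessive homology: Theorem~\ref{thm:excessive} applies to such a subgroup because its fiber is again a closed hyperbolic surface group or a free group of rank $\ge 2$, hence finitely generated and, by the fact recalled earlier, not algebraically fibering.

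Next I would set up an RFRS tower $G=G_{0}\ge G_{1}\ge\cdots$: a descending chain of finite-index subgroups with $G_{i+1}$ normal in $G_{i}$, $\bigcap_{i}G_{i}=1$, and $G_{i+1}\supseteq\ker(G_{i}\to H_{1}(G_{i};\ZZ)/\mathrm{tors})$ for all $i$. Each $G_{i}$ is a finite-index subgroup of $G$, so as above $G_{i}\cong H_{i}\rtimes Q_{i}$ with $H_{i}=G_{i}\cap H$ and $Q_{i}$ free of rank $\ge 2$. Because $Q_{i}$ is free, the five-term exact sequence of $1\to H_{i}\to G_{i}\to Q_{i}\to 1$ in rational homology reduces to $0\to (H_{1}(H_{i};\QQ))_{Q_{i}}\to H_{1}(G_{i};\QQ)\to H_{1}(Q_{i};\QQ)\to 0$. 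Comparing ranks shows that $G_{i}$ has excessive homology precisely when the $Q_{i}$-coinvariants $(H_{1}(H_{i};\QQ))_{Q_{i}}$ are nonzero, and by exactness this holds precisely when $H_{i}$ is not contained in $\ker(G_{i}\to H_{1}(G_{i};\ZZ)/\mathrm{tors})$.

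The heart of the proof is then a short pigeonhole argument. Suppose, for contradiction, that no $G_{i}$ has excessive homology. Then for every $i$ we have $H_{i}\subseteq\ker(G_{i}\to H_{1}(G_{i};\ZZ)/\mathrm{tors})\subseteq G_{i+1}$, hence $H_{i}\subseteq G_{i+1}\cap H=H_{i+1}$; together with the obvious inclusion $H_{i+1}\subseteq H_{i}$ this gives $H_{i}=H_{i+1}$ for all $i$, and so $H_{0}=\bigcap_{i}H_{i}=(\bigcap_{i}G_{i})\cap H=1$, contradicting the fact that $H_{0}=G_{0}\cap H$ is a finite-index subgroup of the infinite group $H$. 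Therefore some $G'=G_{i}$ has excessive homology; by Theorem~\ref{thm:excessive} this $G'$ is incoherent, which completes the proof. I expect essentially all of the real content to lie in this last paragraph: the only property of virtually special groups used is the existence of an RFRS tower, and the crux is that the fiber $H$ cannot be absorbed into $G_{i+1}$ at every level of the tower without collapsing $\bigcap_{i}G_{i}$. Everything else is the splitting bookkeeping of the first paragraph, and I would not expect hyperbolicity to be essential to this particular approach.
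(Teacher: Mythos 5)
Your argument is correct, but it takes a genuinely different route from the paper's. The paper uses the hyperbolicity hypothesis in an essential way: it passes to a finite-index subgroup embedded in a right-angled Artin group (Haglund--Wise) and then applies Haglund's virtual retraction theorem to an infinite cyclic subgroup of $H$ --- quasiconvex precisely because $G$ is hyperbolic --- to produce a map $G' = H'\rtimes F_l \to \ZZ$ that is nontrivial on $H'$; this is exactly excessive homology, and Theorem \ref{thm:excessive} finishes. You instead route through Agol's theorem that virtually special groups are virtually RFRS and run a pigeonhole argument down an RFRS tower: if the fiber $H_i = G_i\cap H$ died in $H_1(G_i;\QQ)$ at every stage, the RFRS condition would push $H_0$ into every $G_{i+1}$ and hence into $\bigcap_i G_i = 1$, which is absurd since $H_0$ is finite index in the infinite group $H$. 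Your identification of excessive homology with nonvanishing of the coinvariants $(H_1(H_i;\QQ))_{Q_i}$ via the five-term sequence agrees with the paper's Lemma \ref{lem:homologyoffreebyfree}, and the splitting bookkeeping for finite-index subgroups is fine. The trade-off: the paper's proof is shorter given the retraction machinery, while yours never uses hyperbolicity, so it in fact proves the stronger statement that \emph{any} virtually special $G = H\rtimes F_n$ with $H$ a closed hyperbolic surface group or a free group of rank $\geq 2$ and $n\geq 2$ is incoherent. One small imprecision: what Agol proves is that right-angled Artin groups, and hence their subgroups, are \emph{virtually} RFRS, not necessarily RFRS on the nose; this costs you nothing, since you only need an RFRS tower for some finite-index subgroup, but the sentence ``such groups are RFRS'' should read ``such groups are virtually RFRS.''
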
 
	
	Using the techniques of the previous theorems, we can show that many groups algebraically fiber, such as groups of the form $\pi_1(S_g) \rtimes \pi_1(S_h)$, which have excessive homology. 
	
	\begin{thmfiber} Suppose that $G$ fits into a short exact sequence  $$ 1 \rightarrow H \rightarrow G \rightarrow Q \rightarrow 1$$  where $H$ is finitely generated, $Q$ is generated by $\lbrace x_1,..,x_n \rbrace$, and $rk H^1(Q, \RR) =n$.  If $rkH^1(G; \RR) >n$, then $G$ algebraically fibers. 
	\end{thmfiber}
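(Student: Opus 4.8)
The plan is to build the fibration by hand, using the excess in $H^1$ to produce a character of $G$ that is not inflated from $Q$, and then correcting it by inflated characters so that the resulting kernel becomes finitely generated. So first I would record the relevant algebra of characters. Since $Q$ is generated by $n$ elements and $rk\,H^1(Q;\RR)=n$, the abelianization $Q^{ab}$ is free abelian of rank $n$ with the images of $x_1,\dots,x_n$ a basis, so the composite $G\to Q\to Q^{ab}$ is an epimorphism $\beta=(\beta_1,\dots,\beta_n)\colon G\twoheadrightarrow\ZZ^n$ with $\beta(H)=0$ and $\beta(g_i)=e_i$, where $g_i$ is a fixed lift of $x_i$. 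The span of $\beta_1,\dots,\beta_n$ in $H^1(G;\RR)$ is the image of the inflation $H^1(Q;\RR)\hookrightarrow H^1(G;\RR)$, which by the inflation--restriction sequence is exactly the subspace of characters killing $H$. Since $rk\,H^1(G;\RR)>n$, I may choose an integral character $\psi\colon G\to\ZZ$ with $\psi|_H\ne 0$. As $\psi$ is a homomorphism to an abelian group defined on all of $G$, the subgroup $N:=\ker(\psi|_H)$ is normal in $G$, $H/N$ is infinite cyclic, and $G$ acts trivially on $H/N$; so $H/N$ is central in $G/N$. Fix $h_*\in H$ with $\psi(h_*)$ generating $\psi(H)$.

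Next I would build the fibering character. For $c=(c_i)\in\ZZ^n$ put $\phi_c:=\psi+\sum_i c_i\beta_i$; this leaves $\psi|_H$ unchanged, so $N=\ker(\phi_c|_H)$ for every $c$, while the values $\phi_c(g_i)=\psi(g_i)+c_i$ are at my disposal. I choose $c$ so that $\phi:=\phi_c$ is surjective onto $\ZZ$. Setting $K:=\ker\phi$, one has $K\cap H=N$, and the image of $K$ in $Q$ is a finite-index, hence finitely generated, subgroup $\overline Q$, so $1\to N\to K\to\overline Q\to1$ and $G=K\rtimes\ZZ$. Taking $\gamma_i\in K$ to be a lift of $x_i$ gotten from $g_i$ by multiplying by an appropriate power of $h_*$, we obtain $K=\langle N,\gamma_1,\dots,\gamma_n\rangle$, and conjugation by $\gamma_i$ acts on $H$ (hence on $N$) as the automorphism $\alpha_i$ induced by $g_i$, post-composed with an inner automorphism of $H$ by a power of $h_*$.

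The remaining step---showing $K$ is finitely generated---is the heart of the matter. The group $N$, a normal subgroup of $H$ with infinite cyclic quotient, is finitely normally generated in $H$ (automatic when $H$ is finitely presented, which covers all the applications), say $N=\langle\langle S\rangle\rangle_H$ with $S$ finite; moreover $S$ together with conjugation by $h_*$ and by the $\alpha_i$ normally generates $N$. I would then show that, for a suitable choice of $c$, already $N=\langle\langle S\rangle\rangle_K$: the lifts $\gamma_i$ ``mix in'' powers of $h_*$, so the subgroup of $\Aut(N)$ generated by $\mathrm{Inn}(N)$ and the conjugation actions of $\gamma_1,\dots,\gamma_n$ recovers the action of $h_*$ itself. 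This is exactly the place where the excess $rk\,H^1(G;\RR)>n$ is spent, since it is what lets $\psi$ be transverse to the inflated characters and so lets $h_*$ enter those actions nontrivially. Granting $N=\langle\langle S\rangle\rangle_K$, the finite set $S\cup\{\gamma_1,\dots,\gamma_n\}$ generates $K$---a finitely normally generated normal subgroup together with lifts of generators of the finitely generated quotient $\overline Q$---so $\phi\colon G\twoheadrightarrow\ZZ$ has finitely generated kernel and $G$ algebraically fibers.

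I expect this last step to be the main obstacle. Conjugation inside $K$ has lost the $\ZZ$-direction of $G=K\rtimes\ZZ$, so it is a priori weaker than conjugation inside $H$, and one must check that pushing the $\gamma_i$ into $K$ via the inflated corrections still makes their conjugation actions, together with $\mathrm{Inn}(N)$, rich enough to recover all of $N$ from the finite set $S$; the precise choice of $c$ is exactly what is arranged to make this work. Note that some genuine use of $rk\,H^1(G;\RR)>n$ is unavoidable: without it every character of $G$ is inflated from $Q$, which forces $H\le\ker\phi$ and reduces the question to a fibering statement about $Q$ that need not hold.
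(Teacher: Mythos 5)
Your setup is the right one, and in fact the character you aim at, $\phi=\psi+\sum_i c_i\beta_i$, is essentially the same fibering character the paper produces (there all $c_i$ equal a single large integer $r$). But the step you yourself flag as ``the heart of the matter''---finite generation of $K=\ker\phi$---is a genuine gap, and the mechanism you sketch does not close it. Two problems. First, you need $N=\ker(\psi|_H)$ to be finitely normally generated in $H$, which forces you to assume $H$ is finitely presented; the theorem only assumes $H$ finitely generated. Second, and more seriously, even granting $N=\langle\langle S\rangle\rangle_K$ with $S$ finite, this does not give $K=\langle S,\gamma_1,\dots,\gamma_n\rangle$: the normal closure of $S$ in $K$ is generated by conjugates $ksk^{-1}$ with $k$ ranging over all of $K$, including elements of $N$ itself, whereas $\langle S,\gamma_1,\dots,\gamma_n\rangle$ only contains the conjugates of $S$ by words in the $\gamma_i$. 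What you actually need is that the $\langle\gamma_1,\dots,\gamma_n\rangle$-orbit of $S$ generates $N$ (not merely normally generates it), and the assertion that the conjugation actions of the $\gamma_i$ together with $\mathrm{Inn}(N)$ ``recover the action of $h_*$'' is exactly the unproved claim; it is not clear how to establish it by hand, nor which $c$ would make it true.

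The paper sidesteps this entirely with the BNS invariant, and that is the idea you are missing. For each $i$ it restricts to the rank-one sub-extension $G_i=H\rtimes\langle a_i\rangle$: the unperturbed character $\alpha_i$ there has kernel $H$, finitely generated by hypothesis, so $[\pm\alpha_i]\in\Sigma^1(G_i)$; by openness of $\Sigma^1$ (Theorem \ref{thm:bnsopen}), the perturbed character $\alpha_i+\frac{1}{r}\gamma$ still has finitely generated kernel $K_i$ for $r$ large. The kernel of the global fibration is then $\langle K_1,\dots,K_n\rangle$, which is manifestly finitely generated, and a short argument shows it is normal with infinite cyclic quotient. Openness of the BNS invariant is precisely what converts your ``suitable choice of $c$'' into a theorem. (The paper also first replaces $G$ by an auxiliary $H\rtimes F_n$ surjecting onto $G$ and pushes the fibration forward, matching your inflation bookkeeping; that reduction is routine, and the substance lives in Theorem \ref{thm:vf}.)
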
 

As an application of these results, we directly find a finite index subgroup that satisfies our homological condition and use this to show: 

	\begin{thmf2} \label{thm:F2} Let $G = F_2 \rtimes F_n$.  Then $G$ is incoherent.  Moreover, $G$ virtually algebraically fibers. \end{thmf2} 
	
	The general plan of the paper is as follows. In Section \ref{sec:fiber} we discuss algebraic fiberings and reveiw some results of Bieri-Neumann-Strebel that we will use. In Section \ref{sec:coherence} we make some general remarks on coherence. Our main Theorem \ref{thm:excessive} is proven in Section \ref{sec:Hbyfree} as well as several corollaries and related results including Corollary \ref{cor:freebyfree} and Theorem \ref{thm:cube}.  In Section 5, we study fibering for a large class of groups including free-by-free and prove Theorem \ref{thm:strongfiber}.  Theorem \ref{thm:f2fn} is proven in Section \ref{sec:F2}. 
	\vskip .2 in 
	{\bf Acknowledgements:}  We thank the other participants of the Emerging Topics Workshop: ``Coherence and Quasiconvex subgroups" held at The Institute for Advanced Study in March 2019 for productive conversations. We also thank Stefano Vidussi for comments on an earlier version of the paper.   The second author was partially supported through NSF DMS- 1709964.

	\section{Background on algebraic fiberings} \label{sec:fiber} 
	\begin{definition}
		We say that a group $G$ {\em algebraically fibers} if there exists a map to $\ZZ$ with finitely generated kernel. 
	\end{definition}
	
	There are many examples of groups that do not fiber. For instance, $F_n, n\geq 2$, $\pi_1(S_g), g\geq2$ and $BS(1, n)$.  
	
	The set of algebraic fibers of a group is analogous to the fibers of a hyperbolic 3-manifold, and there is a well-developed theory of algebraic fibers (which is part of a more comprehensive theory) similar to the theory of fibrations \cite{Thurstonnorm}.

	\begin{definition}
		The {\em character sphere of $G$}, $S(G)$, is $H^1(G; \RR)\smallsetminus \{0\}/\sim$, where $\chi\sim\chi'$ if there is $\lambda\in \RR_+$ such that $\chi = \lambda\chi'$. These are equivalence classes of maps $G \rightarrow \RR$, and we call elements of $S(G)$ {\em characters.}
	\end{definition}
	
	Bieri, Neumann and Strebel described the following invariant in \cite{BNS87}.  Also see \cite{Strebelnotes}.  We follow notation and give specific references from these notes. 
	
	\begin{definition}
		The {\em BNS invariant} $\Sigma^1(G)$ is the set of characters $\chi \in S(G)$ such that the full subgraph on the vertices of $\mathrm{Cay}(G, S)$ where $\chi(v)\geq0$ is a connected graph. 
	\end{definition}
	
	\begin{theorem}\cite[Corollary A4.3]{Strebelnotes}
		Let $\chi\in S(G)$. Then $\ker(\chi)$ is finitely generated if and only if $\chi, -\chi\in \Sigma^1(G)$. 
	\end{theorem}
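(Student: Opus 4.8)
The plan is to prove the two implications separately. Fix a finite generating set $S$ of $G$, write $\Gamma=\mathrm{Cay}(G,S)$, and for $t\in\RR$ let $\Gamma_{\chi\ge t}$ be the full subgraph of $\Gamma$ on the vertices $v$ with $\chi(v)\ge t$; I will use the standard fact that whether $\chi$ lies in $\Sigma^1(G)$ is independent of $S$, so $S$ may be enlarged when convenient. Set $N=\ker\chi$.

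For ``$N$ finitely generated $\Rightarrow$ $\chi,-\chi\in\Sigma^1(G)$'' I would argue directly. Let $X$ be a finite generating set of $N$ and choose $t_1,\dots,t_r\in G$ whose images form a $\ZZ$-basis of $\chi(G)\le\RR$, arranged (after replacing some $t_i$ by $t_i^{-1}$) so that $\chi(t_i)>0$ for all $i$; since every $g\in G$ differs from some $\prod t_i^{m_i}$ by an element of $N$, the set $S:=X\cup\{t_1,\dots,t_r\}$ generates $G$. To see $\Gamma_{\chi\ge0}$ is connected, first note each $n\in N$ is joined to $1$ inside $\Gamma_{\chi\ge0}$, since spelling $n$ by a word in $X^{\pm1}$ gives a path all of whose vertices lie in $N$ and so have $\chi=0$. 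For an arbitrary vertex $g$ with $a:=\chi(g)\ge0$, write $a=\sum_i m_i\chi(t_i)$ and form the word $u$ reading $t_1^{\max(m_1,0)}\cdots t_r^{\max(m_r,0)}\,t_r^{-\max(-m_r,0)}\cdots t_1^{-\max(-m_1,0)}$; since all $\chi(t_i)>0$, the value of $\chi$ along the prefixes of $u$ first increases from $0$ and then decreases to $a$, staying $\ge0$. As $\chi(u)=\chi(g)$, the element $n'':=gu^{-1}$ lies in $N$, and concatenating a path from $1$ to $n''$ in $\Gamma_{\chi\ge0}$ with the $u$-labelled path from $n''$ to $g$ puts $g$ in the component of $1$. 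Hence $\chi\in\Sigma^1(G)$, and the same argument with the $t_i^{-1}$ gives $-\chi\in\Sigma^1(G)$.

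For the converse ``$\chi,-\chi\in\Sigma^1(G)\Rightarrow N$ finitely generated'' I would treat the case $\chi(G)\cong\ZZ$ (the higher-rank case being a notationally heavier variant of the same argument). Pick $\tau\in G$ with $\chi(\tau)=1$, so $G=N\rtimes\langle\tau\rangle$. The key lemma I would need is the algebraic reformulation of connectivity: $\chi\in\Sigma^1(G)$ is equivalent to $G$ being an ascending HNN extension $G=\langle H_+,\tau\mid\tau^{-1}H_+\tau\le H_+\rangle$ with $H_+\le N$ finitely generated and $N=\bigcup_{j\ge0}\tau^{j}H_+\tau^{-j}$, a generating set for $H_+$ being read off from finitely many paths in $\Gamma_{\chi\ge0}$ joining $1$ to the generators in $S$. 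Applying this to $-\chi$ produces a finitely generated $H_-\le N$ with $\tau H_-\tau^{-1}\le H_-$ and $N=\bigcup_{j\ge0}\tau^{-j}H_-\tau^{j}$. Because $H_+,H_-$ are finitely generated and the two unions are nested, $H_-\subseteq\tau^{k}H_+\tau^{-k}$ and $H_+\subseteq\tau^{-k'}H_-\tau^{k'}$ for some $k,k'\ge0$; substituting gives $H_+\subseteq\tau^{k-k'}H_+\tau^{-(k-k')}$ and $H_-\subseteq\tau^{k-k'}H_-\tau^{-(k-k')}$, and the monotonicity of the chains $H_+\le\tau H_+\tau^{-1}\le\cdots$ and $H_-\ge\tau H_-\tau^{-1}\ge\cdots$ turns one of these into an equality; feeding it back into the corresponding union shows $N=H_+$ or $N=H_-$, so $N$ is finitely generated.

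The step I expect to be the main obstacle is the middle one of the converse: extracting from the mere connectivity of $\Gamma_{\chi\ge0}$ a genuinely finitely generated subgroup $H_+\le N$ realising the ascending HNN structure. Connectivity of a single half-space does not suffice to force $N$ finitely generated --- for $BS(1,n)$ exactly one of $\pm\chi$ lies in $\Sigma^1$ while $\ker\chi$ is infinitely generated --- so the two connectivity hypotheses must be played against each other, and the real content is in converting them into a uniform bound on how far the paths joining the finitely many generators of $G$ must wander in the $\chi$-direction; finiteness of $S$ is essential exactly here.
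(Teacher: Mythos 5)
The paper does not prove this statement at all --- it is quoted verbatim from Strebel's notes (Corollary A4.3) and used as a black box --- so your attempt can only be measured against the standard proof in the cited source, which it essentially reconstructs. Your forward direction ($\ker\chi$ finitely generated $\Rightarrow$ $\chi,-\chi\in\Sigma^1(G)$) is complete and correct: the choice of generating set $X\cup\{t_1,\dots,t_r\}$ and the ``go up, then come down'' word $u$ is exactly the right argument, granting the (true, standard) independence of $\Sigma^1$ from the generating set. Your converse is the standard skeleton, but as written it is an outline rather than a proof: the entire weight rests on the ``key lemma'' identifying $\chi\in\Sigma^1(G)$ with an ascending HNN structure $G=\langle H_+,\tau\rangle$ over a finitely generated $H_+\le\ker\chi$, which you state, do not prove, and yourself flag as ``the real content.'' That lemma is correct (it is essentially Proposition 2.1 of Bieri--Neumann--Strebel), and the interplay of the two nested unions is then handled correctly; the only loose end there is the case $k=k'$, where neither displayed inclusion is proper --- but this is harmless, since the unions are increasing and you may always enlarge $k$ by one to force $k\neq k'$.

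The one genuine error is your parenthetical claim that the case where $\chi(G)$ has rank $\ge 2$ is ``a notationally heavier variant of the same argument.'' It is not: for an irrational character there is no $\tau$ with $G=\ker\chi\rtimes\langle\tau\rangle$ and no ascending HNN structure, and in fact the biconditional as stated in the paper is \emph{false} for such $\chi$. Take $G=F_2\times F_2=\langle a,b\rangle\times\langle s,t\rangle$ and $\chi(a)=\chi(b)=1$, $\chi(s)=\chi(t)=-\sqrt{2}$. Since $\sqrt2$ is irrational, $\ker\chi=\ker(\chi|_{\langle a,b\rangle})\times\ker(\chi|_{\langle s,t\rangle})$ is a product of two infinitely generated groups, yet both $[\chi]$ and $[-\chi]$ lie in $\Sigma^1(G)$ because a character of a direct product that is nonzero on each factor always does (an elementary connectivity argument in the Cayley graph of the product). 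The correct criterion for general $\chi$ is that the entire subsphere $S(G,\ker\chi)$ of characters vanishing on $\ker\chi$ lie in $\Sigma^1(G)$; the two-point version holds only for rank-one characters, which is the hypothesis carried by Strebel's Corollary A4.3 and silently dropped in the paper's quotation. None of this affects the paper, since every character it feeds into this theorem has cyclic image, and your rank-one argument is the one that matters --- but you should either restrict the statement to rank one or prove the subsphere criterion, not wave at the higher-rank case.
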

	
	\begin{theorem} \label{thm:bnsopen}\cite[Corollary A3.3]{Strebelnotes}
		$\Sigma^1(G)$ is an open subset of $S(G)$. 
	\end{theorem}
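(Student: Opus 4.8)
The plan is to fix a finite symmetric generating set $S$ of $G$, so that by definition $\chi\in\Sigma^1(G)$ precisely when the full subgraph $\mathrm{Cay}_\chi$ of $\mathrm{Cay}(G,S)$ on $\{g\in G:\chi(g)\ge 0\}$ is connected, and then to show that this is an open condition at each $\chi_0\in\Sigma^1(G)$. Since the projection $H^1(G;\RR)\smallsetminus\{0\}\to S(G)$ is open, it suffices to find a neighbourhood $U$ of a representative $\chi_0\in H^1(G;\RR)\smallsetminus\{0\}$ with $U\subseteq\Sigma^1(G)$. Enlarging $S$ if necessary — which does not change $\Sigma^1(G)$ — and replacing some generators by their inverses, I may fix $t\in S$ with $\chi_0(t)>0$, and for the construction below I take $\chi_0(t)$ large, so that the neighbours $ts$ of $t$ in $\mathrm{Cay}(G,S)$ all satisfy $\chi_0(ts)\ge 0$, with equality only when $ts=1$.

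The strategy is to distil the connectedness of $\mathrm{Cay}_{\chi_0}$ into a \emph{finite} certificate whose validity is cut out by finitely many linear inequalities on the character, hence is stable under a small perturbation. Two elementary observations drive this. First, left translation by $g\in G$ is an automorphism of $\mathrm{Cay}(G,S)$ carrying the full subgraph on $\{\chi\ge 0\}$ onto the one on $\{\chi\ge\chi(g)\}$; taking $g=t$ shows that if $\mathrm{Cay}_\chi$ is connected then so is the full subgraph on $\{\chi\ge\chi(t)\}$. Second — ``descent'' — if $\chi(g)\ge 0$ then $g,\,gt^{-1},\,gt^{-2},\dots$ is a path in $\mathrm{Cay}_\chi$ along which $\chi$ drops by $\chi(t)$ at each step until its value lands in $[0,\chi(t))$; hence every vertex of $\mathrm{Cay}_\chi$ is joined, inside $\mathrm{Cay}_\chi$, to the bounded ``slab'' $\{0\le\chi<\chi(t)\}$ (which, when $\chi$ is discrete and $\chi(t)$ is its least positive value, is just $\ker\chi$). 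So $\chi\in\Sigma^1(G)$ reduces to the assertion that this slab lies in a single component of $\mathrm{Cay}_\chi$.

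To handle the slab I would, using that $\mathrm{Cay}_{\chi_0}$ is connected and taking $\chi_0(t)$ large enough, build a finite connected subgraph $L\subseteq\mathrm{Cay}_{\chi_0}$ with $1\in V(L)$, with $\{t\}\cup\{ts:s\in S\}\subseteq V(L)$, with $\chi_0(v)>0$ for every $v\in V(L)\smallsetminus\{1\}$ (arranged by routing the connecting paths so as to meet $\ker\chi_0$ only at the identity), and — the crucial point — large enough that suitable translates $hL$, which lie inside $\mathrm{Cay}_\chi$ whenever $\chi(h)\ge 0$, can be stitched together to certify that every slab element is connected to $1$: this exploits that $L$ contains full $\mathrm{Cay}(G,S)$-neighbourhoods of the relevant vertices, so that its translates re-route the ``dips'' below $0$ of an arbitrary spelling of a slab element. (In the monoid language used in \cite{Strebelnotes}, this amounts to recording a finite generating set of the relevant submonoid together with the data needed to spell each generator by a word with non-negative partial sums.) Granted such an $L$, openness follows at once: $V(L)$ is finite, the maps $\chi\mapsto\chi(v)$ ($v\in V(L)$) are linear and hence continuous, so on a small enough neighbourhood $U$ of $\chi_0$ one has $\chi(v)>0$ for all $v\in V(L)\smallsetminus\{1\}$ (and $\chi(1)=0$); then $L\subseteq\mathrm{Cay}_\chi$, the two observations hold verbatim, and the re-routing argument shows $\mathrm{Cay}_\chi$ is connected, i.e.\ $\chi\in\Sigma^1(G)$. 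A character with dense image in $\RR$ requires only cosmetic changes, the slab being treated exactly as above even though it is then strictly larger than $\ker\chi$.

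I expect the real work to be concentrated in the ``crucial point'': turning the purely existential statement that $\mathrm{Cay}_{\chi_0}$ is connected into a bounded amount of combinatorial data that simultaneously (i) survives a perturbation of the character — which is what forces all the relevant non-negativity inequalities to be strict, and hence requires the manoeuvre of pushing connecting paths off the kernel hyperplane $\ker\chi_0$ — and (ii) still forces connectedness of the perturbed graph $\mathrm{Cay}_\chi$, whose defining hyperplane $\ker\chi$ has moved. Everything else in the argument (openness of the quotient map, continuity of finitely many linear functionals, left translation being a graph automorphism, and the descent along powers of $t$) is routine.
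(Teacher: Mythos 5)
The paper offers no proof of this statement --- it is quoted directly from \cite[Corollary A3.3]{Strebelnotes} --- so the only meaningful comparison is with Strebel's argument, and your outline does follow that argument's architecture: reduce connectedness of the positive subgraph to connecting a slab $\{0\le\chi<\chi(t)\}$ to the identity via descent along $t$-edges, then replace the a priori infinitary connectedness condition by a finite family of strict linear inequalities, which is manifestly open. The preliminary reductions you make (openness of the quotient onto $S(G)$, invariance of $\Sigma^1$ under enlarging the generating set, left translation as a graph automorphism carrying $\{\chi\ge 0\}$ to $\{\chi\ge\chi(g)\}$, descent to the slab) are all correct.

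The gap is that essentially all of the mathematical content of the theorem sits inside the step you label the ``crucial point'' and leave unexecuted, and it consists of two separate implications, neither of which is routine. First, from $\chi_0\in\Sigma^1(G)$ you must produce a certificate with \emph{strict} positivity off the identity: connectedness of $\mathrm{Cay}_{\chi_0}$ only hands you paths with $\chi_0\ge 0$, and ``routing the connecting paths so as to meet $\ker\chi_0$ only at the identity'' is not a cosmetic adjustment --- a given path from $t$ to $st$ may be forced through $\ker\chi_0$, and one must shift paths upward by left-multiplying by powers of $t$ and then repair the endpoints; this is precisely the content of Strebel's Theorem A3.1 (the criterion that for each generator $s$ there is a path $p_s$ from $t$ to $st$ with valuation strictly above $\min\{0,\chi(s)\}$). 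Second, and conversely, from the certificate you must show that an \emph{arbitrary} slab element --- whose every spelling may have partial sums dipping arbitrarily far below $0$ --- gets connected to $1$: each re-routing by a translate of $L$ raises a dip only by a definite positive amount, so one needs an induction on the depth of the dips (and, for characters with dense image, a uniform positive lower bound on the improvement) to see that the process terminates. You correctly identify where the work lies, but asserting that a sufficiently large $L$ ``can be stitched together'' discharges neither implication; as written this is a plan for the standard proof rather than a proof.
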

	
	\begin{cor}
		Suppose that $rk(H^1(G;\RR))\geq 2$ and there exists $\chi$ with finitely generated kernel. Then there are infinitely many other $\chi'\colon G\to\ZZ$ such that $\ker(\chi')$ is finitely generated. 
	\end{cor}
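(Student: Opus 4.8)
The plan is to combine the two BNS facts stated just above: that $\ker(\chi)$ is finitely generated precisely when $\{\chi,-\chi\}\subseteq\Sigma^1(G)$ (Corollary A4.3), and that $\Sigma^1(G)$ is open in $S(G)$ (Theorem~\ref{thm:bnsopen}). So suppose $\chi\in S(G)$ has finitely generated kernel; then both $\chi$ and $-\chi$ lie in the open set $\Sigma^1(G)$. Since $rk(H^1(G;\RR))\geq 2$, the character sphere $S(G)$ is a sphere of dimension at least $1$, hence has infinitely many points, and the antipodal map $\chi\mapsto-\chi$ is a homeomorphism of $S(G)$. First I would pick an open neighbourhood $U$ of $\chi$ with $U\subseteq\Sigma^1(G)$; then $-U$ is an open neighbourhood of $-\chi$, and by openness again (or directly, since $\Sigma^1$ need not be symmetric) I instead take $U$ small enough that $U\subseteq\Sigma^1(G)$ and $-U\subseteq\Sigma^1(G)$ simultaneously — possible because $\Sigma^1(G)$ is open, so $\Sigma^1(G)\cap(-\Sigma^1(G))$ is an open set containing $\chi$. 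Every $\chi'\in U$ then satisfies $\chi',-\chi'\in\Sigma^1(G)$, so $\ker(\chi')$ is finitely generated.

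It remains to produce infinitely many \emph{rational} such characters, i.e.\ maps $G\to\ZZ$ (up to the scaling equivalence, a character is represented by a homomorphism to $\ZZ$ exactly when it lies in the image of $H^1(G;\ZZ)$, equivalently when its ray has rational slope). Since $U$ is a nonempty open subset of the sphere $S(G)$ of dimension $\geq 1$, and the rational points (classes of integral characters) are dense in $S(G)$ — $H^1(G;\QQ)$ is dense in $H^1(G;\RR)$ — the set $U$ contains infinitely many distinct classes of integral characters $\chi'\colon G\to\ZZ$, each with finitely generated kernel. This finishes the argument.

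The only mildly delicate point is bookkeeping about what ``$\chi'\colon G\to\ZZ$'' means on the character sphere: a character $\chi'\in S(G)$ is the class of a homomorphism to $\ZZ$ iff it has rational coordinates, and two integral homomorphisms can represent the same character, so ``infinitely many $\chi'$'' should be read as infinitely many distinct characters (equivalently, infinitely many non-proportional integral homomorphisms), and one should note that finite generation of the kernel depends only on the character, not the chosen integral representative. I do not expect any real obstacle here; the content is entirely in the two cited BNS results, and the density of rational points on the sphere together with openness of $\Sigma^1(G)\cap(-\Sigma^1(G))$ does the rest.
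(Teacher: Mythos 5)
Your proposal is correct and is exactly the argument the paper intends: the corollary is stated without proof immediately after the two cited BNS facts, and the openness of $\Sigma^1(G)\cap(-\Sigma^1(G))$ together with density of rational characters on a sphere of dimension $\geq 1$ is precisely the mechanism the authors use again later (e.g.\ in the proof of Theorem \ref{thm:incohwithhomology}). Your care about the asymmetry of $\Sigma^1(G)$ and about rational representatives is appropriate and introduces no gap.
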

	
	\section{Background on incoherence} \label{sec:coherence}

	While many groups are known to be incoherent by a result of Rips \cite{Ripsconstruction}, the most important concrete example is $F_2 \times F_2$.  The original proof is attributed to Stallings and uses the second homology of the group.  The proof in Section \ref{sec:intro} is related to our techniques. We provide the key detail missing from Section \ref{sec:intro} here. To create a finitely generated but not finitely presented group we use the following lemma from \cite{BowditchMess} where it is attributed to B. Neumann.  
	
	\begin{theorem}\label{notfinpresented}\cite{BowditchMess} (B. Neumann) 
		Let $G_1, G_2$ be finitely generated groups. Let $G = G_1\ast_H G_2$. If $H$ is not finitely generated, then $G$ is not finitely presented.
	\end{theorem}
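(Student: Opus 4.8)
The plan is to argue by contradiction: assume $G = G_1 \ast_H G_2$ is finitely presented and deduce that $H$ must be finitely generated. Fix finite generating sets $X_1$ of $G_1$ and $X_2$ of $G_2$, and write $G_i = \langle X_i \mid R_i\rangle$, where $R_i$ may be infinite since $G_i$ is only assumed finitely generated. For each $h \in H$ pick words $p_h$ in $X_1^{\pm 1}$ and $q_h$ in $X_2^{\pm 1}$ representing the two copies of $h$, and set $T = \{p_h q_h^{-1} : h \in H\}$; then $G$ is presented on the \emph{finite} generating set $X = X_1 \sqcup X_2$ by $\langle X \mid R_1 \cup R_2 \cup T\rangle$. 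The key input is the standard fact (essentially B.\,H.\ Neumann's) that a finitely presented group presented on a finite generating set by an arbitrary relator set is already presented by finitely many of those relators; hence there is a finite $\mathcal R' \subseteq R_1 \cup R_2 \cup T$ with $\langle X \mid \mathcal R'\rangle \cong G$ via the identity on $X$. Setting $T' = \mathcal R' \cap T$ and comparing the natural surjections $\langle X \mid \mathcal R'\rangle \twoheadrightarrow \langle X \mid R_1 \cup R_2 \cup T'\rangle \twoheadrightarrow \langle X \mid R_1 \cup R_2 \cup T\rangle = G$, whose composite is the identity on $X$, one concludes $\langle X \mid R_1\cup R_2\cup T'\rangle \cong G$ as well.

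Next I would choose a finite set $H_0 \subseteq H$ with $T' = \{p_h q_h^{-1}: h \in H_0\}$ and put $H' = \langle H_0\rangle \le H$. Then $\langle X \mid R_1 \cup R_2 \cup T'\rangle$ is exactly the amalgamated free product $P := G_1 \ast_{H'} G_2$: we have merely identified, for each $h \in H_0$, the copy of $h$ in $G_1$ with the copy of $h$ in $G_2$, and $H' = \langle H_0\rangle$ is the same subgroup whether computed inside $H$, inside $G_1$, or inside $G_2$. Thus the previous step produces an isomorphism $\theta \colon P = G_1 \ast_{H'} G_2 \xrightarrow{\ \cong\ } G = G_1 \ast_H G_2$ restricting to the identity on each of $G_1$ and $G_2$.

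Finally I would derive the contradiction. Since $H$ is not finitely generated but $H' = \langle H_0\rangle$ is, there is $h^\ast \in H \setminus H'$. Inside $P = G_1 \ast_{H'} G_2$ consider $w = h^\ast_{G_1}\,(h^\ast_{G_2})^{-1}$, the copy of $h^\ast$ in the factor $G_1$ times the inverse of the copy of $h^\ast$ in the factor $G_2$. As $h^\ast \notin H' = G_1 \cap G_2$, this is a reduced sequence of syllable length two, so $w \neq 1$ in $P$ by the normal form theorem for amalgamated products. But $\theta(w) = h^\ast_{G_1}(h^\ast_{G_2})^{-1} = 1$ in $G$, because $h^\ast \in H$ and the two copies of $h^\ast$ are identified in $G_1 \ast_H G_2$. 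This contradicts injectivity of $\theta$, so $H$ is finitely generated after all.

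I expect the only genuinely non-formal ingredient to be the ``finite sub-presentation'' fact used in the first paragraph — turning abstract finite presentability into the statement that a finite subset of a \emph{given} relator set already suffices, which in turn rests on the usual Tietze-transformation argument that a finitely presented group admits a finite presentation on any prescribed finite generating set. The rest is bookkeeping with amalgam presentations, together with a single application of the amalgamated-product normal form theorem.
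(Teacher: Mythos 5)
The paper does not prove this statement; it is quoted as background from Bowditch--Mess (attributed to B.~Neumann), so there is no internal proof to compare against. Your argument is the standard proof of that result and is correct: the passage to a finite sub-presentation $\langle X \mid R_1\cup R_2\cup T'\rangle$ is justified by B.~H.~Neumann's lemma (a finitely presented group presented on a finite generating set by an arbitrary relator set is already presented by finitely many of them); the identification of $\langle X \mid R_1\cup R_2\cup T'\rangle$ with $G_1\ast_{H'}G_2$ holds because the relators $p_hq_h^{-1}$ for $h\in H'$ not in $H_0$ are consequences of those for $h\in H_0$ together with $R_1\cup R_2$; and the final contradiction, via the normal form theorem applied to the length-two reduced word $h^{\ast}_{G_1}(h^{\ast}_{G_2})^{-1}$ with $h^{\ast}\in H\smallsetminus H'$, is exactly where the hypothesis that $H$ is not finitely generated enters. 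This self-contained argument could legitimately replace the citation.
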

	
	Now consider $F_2 \times F_2 = \langle a,b\rangle \times \langle s,t\rangle$.  Consider the map $\phi$ to $\mathbb{Z}$ that sends each generator to 1.  Then let  $ H= \langle as^{-1}, bs^{-1}\rangle *_K \langle at^{-1},bt^{-1}\rangle$, where $K$ is the kernel restricted of $\phi$ to $\langle a,b\rangle$.  Since free groups do not algebraically fiber, $K$ must be infinitely generated and the incoherence of $F_2 \times F_2$ follows from Neumann's theorem. 
	
	Since any group that contains an incoherent group is incoherent, we have immediately that the right-angled Coxeter group on the graph $K_{3,3}$ is incoherent. However, to illustrate the subtlety of the problem, we note that if even one edge is subdivided, this group is coherent. To see this we use the following result of Karass and Solitar.
	
	\begin{theorem} \cite{KarrassSolitar}
		Let $G, G'$ be coherent groups. Let $H$ be a subgroup of $G$ and $G'$. If every subgroup of $H$ is finitely generated, then $G\ast_H G'$ is coherent. 
	\end{theorem}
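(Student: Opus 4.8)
The plan is to prove coherence directly: take an arbitrary finitely generated subgroup $L \le G \ast_H G'$ and show it is finitely presented. The natural tool is Bass--Serre theory. The amalgam acts on its Bass--Serre tree $T$ with vertex stabilizers the conjugates of $G$ and of $G'$ and edge stabilizers the conjugates of $H$; restricting this action to $L$ realizes $L$ as the fundamental group of a graph of groups $\mathcal G$ over the quotient graph $Y = L\backslash T$, whose vertex groups are the subgroups $L\cap G^g$ and $L\cap {G'}^g$ and whose edge groups are the subgroups $L\cap H^g$. The first observation is that every edge group is finitely generated: conjugation in $G\ast_H G'$ carries $H$ isomorphically onto $H^g$, and every subgroup of $H$ is finitely generated by hypothesis, so $L\cap H^g$ is finitely generated.

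Since $L$ is finitely generated, I would next cut the possibly infinite graph $Y$ down to a finite connected subgraph $Y_0$ carrying a finite generating set of $L$, so that the inclusion-induced homomorphism $\pi_1(\mathcal G|_{Y_0}) \to L$ is surjective; as inclusions of sub-graphs-of-groups are injective on fundamental groups, this isomorphism exhibits $L$ as $\pi_1$ of a \emph{finite} graph of groups whose edge groups are still finitely generated and whose vertex groups are, up to the conjugation isomorphisms above, subgroups of $G$ or of $G'$. The main obstacle is then to show that these vertex groups are themselves finitely generated --- this is exactly where finite generation of the edge groups is used. I would prove it by induction on the number of edges, reducing across a single amalgamation or HNN splitting and invoking the elementary fact that if $A\ast_C B$ is finitely generated and $C$ is finitely generated then $A$ and $B$ are finitely generated (and likewise for an HNN extension with base $A$ and associated subgroup $C$): writing a finite generating set of the total group in terms of normal forms and collecting the finitely many vertex-group letters that occur, together with a finite generating set of $C$, produces a finite generating set of the base(s).

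Finally I would assemble the conclusion. Each vertex group of the finite decomposition of $L$ is now a finitely generated subgroup of the coherent group $G$ or $G'$, hence finitely presented; the edge groups are finitely generated; and the fundamental group of a finite graph of groups with finitely presented vertex groups and finitely generated edge groups is finitely presented --- a finite presentation is obtained from finite presentations of the vertex groups, finite generating sets of the edge groups, the finitely many edge relations, and one stable letter for each edge outside a spanning tree. Hence $L$ is finitely presented, and since $L$ was an arbitrary finitely generated subgroup, $G\ast_H G'$ is coherent. I expect the only genuinely delicate point to be the reduction to a finite graph of groups together with the finite generation of its vertex groups; everything else is a routine application of Bass--Serre theory and of the standard presentation of a graph of groups.
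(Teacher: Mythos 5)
The paper does not prove this statement: it is quoted directly from Karrass--Solitar and used as a black box, so there is no in-paper proof to compare against. Your argument is correct and is the standard modern (Bass--Serre-theoretic) proof of their result: the hypothesis that every subgroup of $H$ is finitely generated is used exactly where you use it, to make all edge groups $L\cap H^g$ finitely generated; the reduction of a finitely generated $L$ to the fundamental group of a \emph{finite} graph of groups (via a cocompact invariant subtree, or equivalently a finite carrying subgraph) is legitimate; and the key lemma that the factors of a finitely generated amalgam or HNN extension over a finitely generated edge group are themselves finitely generated is true --- the clean justification is that if $C\le A_0\le A$ and $C\le B_0\le B$ then $\langle A_0,B_0\rangle\cap A=A_0$ inside $A\ast_C B$ (and the analogous statement from Britton's lemma in the HNN case), so the subgroup generated by $C$ together with the finitely many syllables of the generators is all of $A$. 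The final assembly (finitely generated vertex groups sit in the coherent factors, hence are finitely presented, and a finite graph of finitely presented vertex groups over finitely generated edge groups has finitely presented fundamental group) is routine, as you say. I see no gap.
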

	
	Therefore, if we even subdivide one edge of the $K_{3,3}$ graph, the resulting right angled Coxeter group is coherent.  Indeed we can write this new group  as a free product with amalgamation of two groups on planar graphs over a virtually cyclic group.  Since a right-angled Coxeter group defined by a planar graph is virtually a 3-manifold group \cite[Theorem 11.4.1]{DavisOkun}, both of these groups are coherent.  Therefore, Karrass and Solitar's result implies that the right-angled Coxeter group on the subdivided graph is coherent. 
	
	More generally, the same argument shows that: 
	
	\begin{prop} The right-angled Coxeter group on the barycentric subdivision of any graph is coherent.  \end{prop}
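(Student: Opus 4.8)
The plan is to imitate the argument given for the subdivided $K_{3,3}$, applied one vertex at a time along the barycentric subdivision. Let $\Gamma$ be an arbitrary finite simplicial graph and let $\Gamma'$ be its barycentric subdivision, so that the vertex set of $\Gamma'$ is $V(\Gamma) \sqcup E(\Gamma)$, and each original edge $e = \{u,v\}$ of $\Gamma$ becomes a path $u - m_e - v$ through the barycenter $m_e$. The right-angled Coxeter group $W_{\Gamma'}$ is generated by involutions indexed by $V(\Gamma')$, with two generators commuting precisely when the corresponding vertices are adjacent in $\Gamma'$. The key structural observation is that in $\Gamma'$, every original vertex $u \in V(\Gamma)$ has a neighborhood consisting only of barycenters $m_e$ (for $e$ incident to $u$), and no two of these barycenters are adjacent to each other; dually, every barycenter $m_e$ has exactly two neighbors, $u$ and $v$, which are non-adjacent. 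In particular the link of any vertex of $\Gamma'$ is a discrete (edgeless) graph.

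First I would set up the splitting along a single vertex. Pick a barycenter vertex $m_e$ with neighbors $u, v$. Its link in $\Gamma'$ is the two-point edgeless graph $\{u,v\}$, so the star of $m_e$ generates the subgroup $W_{\{u,v,m_e\}} \cong W_{\{u,v\}} \times \langle m_e \rangle \cong (\ZZ/2 * \ZZ/2) \times \ZZ/2$, which is virtually $\ZZ$ (in particular virtually cyclic). Deleting $m_e$ from $\Gamma'$ disconnects $m_e$'s star from nothing problematic: by the standard visual splitting of RACGs over stars of separating vertices (Cref to the fact used in the $K_{3,3}$ discussion), $W_{\Gamma'}$ decomposes as an amalgam or HNN extension over the virtually cyclic subgroup $W_{\mathrm{star}(m_e) \setminus \{m_e\}} = W_{\{u,v\}}$. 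Since every subgroup of a virtually cyclic group is finitely generated, the Karrass–Solitar theorem reduces coherence of $W_{\Gamma'}$ to coherence of the RACG(s) on the strictly smaller graph(s) obtained by removing $m_e$ and redistributing its star.

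Second, I would iterate. After removing all barycenters of a suitable spanning set of edges we should reach graphs that are themselves barycentric subdivisions of smaller graphs (or disjoint unions thereof), and we induct on the number of vertices; the base case is a single vertex, whose RACG is $\ZZ/2$, trivially coherent. The cleanest way to organize this is probably to induct on $|E(\Gamma)|$: if $\Gamma$ has no edges, $W_{\Gamma'} = W_\Gamma$ is a finite direct power of $\ZZ/2$; otherwise pick an edge $e$, split $W_{\Gamma'}$ over the virtually cyclic star-link of $m_e$ as above, and observe that each resulting vertex group is the RACG on the barycentric subdivision of a graph with one fewer edge (after absorbing $u, v, m_e$ appropriately), which is coherent by induction. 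Then Karrass–Solitar gives coherence of $W_{\Gamma'}$.

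The main obstacle is making precise the claim that removing the barycenter $m_e$ yields the barycentric subdivision of a smaller graph, or at least a graph to which the induction applies cleanly — one must track what happens to the two original vertices $u$ and $v$ and their stars, and verify that the visual-splitting hypothesis (that $\mathrm{star}(m_e)\setminus\{m_e\}$ separates $\Gamma'$, or equivalently that $\mathrm{lk}(m_e)$ is a separating subgraph, so that $W_{\Gamma'}$ genuinely splits as an amalgam or HNN extension over $W_{\{u,v\}}$) is satisfied at each stage. If $m_e$'s link $\{u,v\}$ does not separate $\Gamma'$, one instead gets an HNN extension of $W_{\Gamma' \setminus m_e}$ over $W_{\{u,v\}}$, which is still covered by the relative version of Karrass–Solitar since the edge group and its subgroups are all finitely generated; so the argument goes through in either case, and coherence follows.
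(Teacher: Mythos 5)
Your proof is correct and rests on the same two pillars as the paper's argument for the subdivided $K_{3,3}$: the observation that in a barycentric subdivision the link of a barycenter $m_e$ is a pair of non-adjacent vertices $\{u,v\}$, so the visual splitting there is over the infinite dihedral group $W_{\{u,v\}}\cong \ZZ/2\ast\ZZ/2$, all of whose subgroups are finitely generated, together with the Karrass--Solitar theorem. Where you differ is in how you certify coherence of the vertex groups: the paper writes the subdivided $K_{3,3}$ group as a single amalgam of two RACGs on planar graphs and invokes Davis--Okun plus coherence of $3$-manifold groups, whereas you induct on the number of edges of $\Gamma$, peeling off one barycenter at a time. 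Your reorganization is arguably the more robust instantiation of ``the same argument,'' since for a general graph the barycentric subdivision need not be a union of two planar full subgraphs (graphs of thickness at least $3$ exist), so the literal one-step planar decomposition does not generalize, while your induction does and avoids Davis--Okun entirely. Two small corrections: the RACG on an edgeless graph is the \emph{free} product of copies of $\ZZ/2$, not the direct product (it is still virtually free, hence coherent, so your base case survives); and your worry about HNN extensions is moot --- writing $\Gamma'$ as the union of the full subgraphs $\Gamma'\setminus\{m_e\}$ and $\mathrm{star}(m_e)$, whose intersection $\{u,v\}$ is full, always yields the amalgam $W_{\Gamma'\setminus\{m_e\}}\ast_{W_{\{u,v\}}}W_{\mathrm{star}(m_e)}$ whether or not $\{u,v\}$ separates, and $\Gamma'\setminus\{m_e\}$ is exactly the barycentric subdivision of $\Gamma-e$, so the induction closes cleanly.
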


	\section[Incoherence of finitely generated-by-free groups]{Incoherence of $H$-by-free groups} \label{sec:Hbyfree} 
	
	We will be studying groups of the form $H\rtimes F_k$. It will be useful to first have an understanding on the homology of such a group.
	
	\begin{lemma}\label{lem:homologyoffreebyfree}
		Let $G = H\rtimes F_k$ be a finitely presented group. Let $\phi_1, \dots, \phi_k$ be the corresponding automorphisms. Let $\Phi_i$ be the automorphism induced on the abelianisation of $H$. Then $H_1(G;\ZZ) = \ZZ^k\times (H_1(H;\ZZ)/\langle(\Phi_i - I)(H_1(H;\ZZ))\rangle)$, here $I$ is the identity matrix.
	\end{lemma}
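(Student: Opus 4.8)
The plan is to compute $H_1(G;\ZZ)=G^{\mathrm{ab}}$ directly from the split short exact sequence $1\to H\to G\to F_k\to 1$ coming from the semidirect product decomposition. I would feed this extension into the five-term exact sequence in integral homology (the low-degree terms of the Lyndon--Hochschild--Serre spectral sequence), which reads
\[ H_2(G;\ZZ)\longrightarrow H_2(F_k;\ZZ)\longrightarrow H_1(H;\ZZ)_{F_k}\longrightarrow H_1(G;\ZZ)\longrightarrow H_1(F_k;\ZZ)\longrightarrow 0, \]
where $H_1(H;\ZZ)_{F_k}$ is the module of coinvariants of $H_1(H;\ZZ)$ under the $F_k$-action in which the $i$-th free generator $t_i$ acts by $\Phi_i$ (this being the conjugation action $t_i g t_i^{-1}=\phi_i(g)$ read on abelianisations). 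Since $F_k$ has a one-dimensional classifying space, $H_2(F_k;\ZZ)=0$, so the sequence collapses to $0\to H_1(H;\ZZ)_{F_k}\to H_1(G;\ZZ)\to\ZZ^k\to 0$, which splits because $\ZZ^k$ is free abelian (the splitting is also induced by the section $F_k\hookrightarrow G$). Hence $H_1(G;\ZZ)\cong\ZZ^k\times H_1(H;\ZZ)_{F_k}$, and it remains only to identify the coinvariants with the stated quotient.

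By definition $H_1(H;\ZZ)_{F_k}$ is $H_1(H;\ZZ)$ modulo the subgroup $N$ generated by all $\gamma\cdot x-x$ with $\gamma\in F_k$ and $x\in H_1(H;\ZZ)$, and the claim to verify is that $N=\langle(\Phi_i-I)(H_1(H;\ZZ)):1\le i\le k\rangle$. One inclusion is clear since $t_i\cdot x-x=(\Phi_i-I)(x)$. For the reverse inclusion, write $\gamma=s_1\cdots s_\ell$ as a word in the $t_i^{\pm1}$ and telescope, $\gamma\cdot x-x=\sum_{j=1}^{\ell}(s_j\cdot x_j-x_j)$ with $x_j=s_{j+1}\cdots s_\ell\cdot x$; each summand equals $(\Phi_i^{\pm1}-I)(x_j)$ for the appropriate $i$, and since $(\Phi_i^{-1}-I)(y)=-(\Phi_i-I)(\Phi_i^{-1}y)$, every summand lies in $(\Phi_i-I)(H_1(H;\ZZ))$. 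Substituting back into the previous paragraph gives the asserted formula.

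The argument is essentially formal; the one step that deserves care — and which I would expect to be the only real content — is this last identification, i.e.\ checking that it suffices to quotient by the images of the generating automorphisms $\Phi_1,\dots,\Phi_k$ rather than by the full $F_k$-action, which is the telescoping computation above together with the remark that $(\Phi_i^{-1}-I)(H_1(H;\ZZ))\subseteq(\Phi_i-I)(H_1(H;\ZZ))$. One can also bypass the spectral sequence entirely: take a presentation of $G$ with generators those of $H$ together with $t_1,\dots,t_k$ and relations those of $H$ together with $t_igt_i^{-1}=\phi_i(g)$, and abelianise by hand. The $t_i$ contribute a free $\ZZ^k$; the relators $t_igt_i^{-1}\phi_i(g)^{-1}$ become $(\Phi_i-I)(\bar g)=0$ in $H_1(H;\ZZ)$; and the commutators between the $t_i$ and between $t_i$ and the generators of $H$ vanish automatically, yielding the same answer.
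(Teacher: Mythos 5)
Your proposal is correct, but your primary route differs from the paper's. The paper simply writes down the presentation $\langle a_1,\dots,a_n,t_1,\dots,t_k \mid t_ia_jt_i^{-1}=\phi_i(a_j),\, R\rangle$ and abelianises it directly — exactly the ``bypass'' you sketch in your final paragraph — so the relators $t_ia_jt_i^{-1}\phi_i(a_j)^{-1}$ immediately impose only the relations $(\Phi_i-I)(\bar a_j)=0$ for the $k$ generating automorphisms, and no reduction from the full $F_k$-action is ever needed. Your main argument instead runs the five-term exact sequence of the extension $1\to H\to G\to F_k\to 1$, uses $H_2(F_k;\ZZ)=0$ and the splitting over the free abelian quotient $\ZZ^k$ to get $H_1(G;\ZZ)\cong\ZZ^k\oplus H_1(H;\ZZ)_{F_k}$, and then must separately identify the coinvariants with the quotient by $\langle(\Phi_i-I)(H_1(H;\ZZ))\rangle$; your telescoping computation together with the observation that $(\Phi_i^{-1}-I)(y)=-(\Phi_i-I)(\Phi_i^{-1}y)$ does this correctly. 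What your approach buys is conceptual clarity and generality — it isolates exactly where the hypothesis on the quotient is used ($H_2=0$ and freeness of $H_1$ of the quotient) and would adapt to other quotients — at the cost of invoking the spectral sequence and the extra coinvariants lemma, which the presentation-level computation gets for free.
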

	\begin{proof}
		Let $H = \langle a_1, \dots, a_n\mid R\rangle.$ There is a presentation for $G$ of the form $$\langle a_1, \dots, a_n, t_1, \dots, t_k\mid t_ia_jt_i^{-1} = \phi_i(a_j), R\rangle.$$ Thus, when we abelianise we arrive at $\ZZ^{k}\oplus H_1(H;\ZZ)$ with the extra relations that $\phi_i(a_j) = a_j$. We replace $\phi_i$ with the map on the abelianization $\Phi_i$ and rewrite the relation as $\Phi_i(a_j)-a_j$ or $(\Phi_i - I)(a_j)$. Thus, we arrive at the desired conclusion. 
	\end{proof}
	
	The proof that $H^1(G;\RR) = \RR^k\times (H^1(H;\RR))/\langle(\Phi_i - I)(H^1(H;\RR))\rangle)$ is extremely similar. 
	\vskip .2 in

	Given an extension $H\rtimes F_k$ there is a natural map $F_k\to \mathrm{Aut}(H)$. This also gives a natural map $F_k\to\mathrm{Out}(H)$. 
	Our main interest is when $H$ contains a non-abelian free subgroup. In this case we can reduce to the case that the map $F_k\to\mathrm{Out}(H)$ is injective. 
	
	\begin{lem}
		Let $G = H\rtimes F_k, k\geq 2$. Suppose that $H$ contains a non-abelian free subgroup and that the natural map $F_k\to\mathrm{Out}(H)$ is not injective. Then $G$ contains a copy of $F_2\times F_2$. Thus, $G$ is incoherent. 
	\end{lem}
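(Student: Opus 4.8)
\begin{sketch}
The plan is to exhibit an explicit copy of $F_2\times F_2$ inside $G$, with one factor contained in $H$ and the other in the centralizer $C_G(H)$, and then to deduce incoherence from the incoherence of $F_2\times F_2$ recalled above, together with the fact that any group containing an incoherent subgroup is itself incoherent.

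Let $N\leq F_k$ be the kernel of the natural map $F_k\to\Out(H)$, so that $N\neq 1$ by hypothesis. The first step converts elements of $N$ into elements of $G$ centralizing $H$: given $w\in N$, the automorphism $\phi_w$ of $H$ induced by conjugation by a lift of $w$ is inner, i.e.\ equals conjugation by some $h\in H$; the resulting relation $waw^{-1}=hah^{-1}$ (valid for all $a\in H$) then shows that $z_w:=h^{-1}w$ lies in $C_G(H)$, and since $z_w$ maps to $w$ under $G\to F_k$ we have in particular $z_w\neq 1$.

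Next I would use the structure of normal subgroups of free groups. Since $k\geq 2$ and $N$ is a nontrivial normal subgroup of $F_k$, either $N$ has finite index --- hence is free of rank $1+[F_k:N](k-1)\geq 2$ --- or $N$ has infinite index and is therefore not finitely generated, as a nontrivial finitely generated normal subgroup of a free group has finite index. In either case $N$ contains a rank-two free subgroup $\langle w_1,w_2\rangle\cong F_2$. Setting $z_i:=z_{w_i}\in C_G(H)$, the subgroup $\langle z_1,z_2\rangle$ surjects onto $\langle w_1,w_2\rangle\cong F_2$ under $G\to F_k$; as $F_2$ is free this surjection splits, and the image $F''$ of a splitting is a copy of $F_2$ sitting inside $C_G(H)$ and mapping isomorphically into $F_k$, so that $F''\cap H=1$.

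Finally, fix a nonabelian free subgroup $F'\cong F_2$ of $H$. Since $F''\subseteq C_G(H)$, the subgroups $F'$ and $F''$ commute, and since $F'\subseteq H$ while $F''\cap H=1$ they meet trivially; hence $\langle F',F''\rangle=F'\times F''\cong F_2\times F_2$ is a subgroup of $G$, and $G$ is incoherent. The crux of the argument --- and the only place the hypothesis $k\geq 2$ is genuinely used --- is the structural input that the nontrivial normal subgroup $N$ of $F_k$ must contain a nonabelian free group; the remaining steps are formal, relying only on the freeness of $F_2$ to split the relevant extensions and on keeping the semidirect-product conventions straight so that the centralizing element is $h^{-1}w$.
\end{sketch}
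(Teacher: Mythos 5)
Your proof is correct and follows essentially the same route as the paper's: both convert elements of $\ker(F_k\to\Out(H))$ into elements centralizing a free subgroup of $H$ by correcting with the inner-conjugating element of $H$, then pair a rank-two free subgroup of that kernel with a nonabelian free subgroup of $H$ to exhibit $F_2\times F_2$. The only cosmetic differences are how the rank-two free subgroup of the kernel is produced (the paper takes a nontrivial kernel element together with a suitable conjugate, while you invoke the structure theory of nontrivial normal subgroups of $F_k$) and that you spell out the splitting/trivial-intersection details the paper leaves implicit.
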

	\begin{proof}
		Since the map $F_k\to\mathrm{Out}(H)$ is not injective let $s$ be a non-trivial element of the kernel and let $t$ be a conjugate of $s$ in $F_k$ such that $s$, $t$ generate a free subgroup.  Let $a, b\in H$ be generators of a free subgroup of $H$. Then $s$ and $t$ both act by conjugation on $H$, that is, $s^{-1}as = g_sag_s^{-1}, s^{-1}bs = g_sbg_s^{-1}, t^{-1}at = g_tag_t^{-1}$ and $t^{-1}bt = g_tbg_t^{-1}$, for some $g_s$ and $g_t$ in $H$.  Thus we can see that $\langle a, b, sg_s, tg_t\rangle$ is a copy of $F_2\times F_2$. 
	\end{proof}
	
	Thus for the most part we will be interested in cases where the map $F_k\to\mathrm{Out}(H)$ is injective. 
	
	In general, we will consider the case that $H$ does not  algebraically fiber. 
	
	\begin{lemma} \label{lem:construction} 
		Let $G_i = H\rtimes_{\phi_i}\ZZ$, where $H$ does not algebraically fiber. Suppose that $\alpha_i\colon G_i\to\ZZ$ are homomorphisms such that $\restr{\alpha_1}{H} = \restr{\alpha_2}{H}$ are non trivial. Suppose further that $K_i = \ker(\alpha_i)$ is finitely generated. Then $G = G_1\ast_H G_2$ is incoherent. 
	\end{lemma}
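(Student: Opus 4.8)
The plan is to produce an explicit finitely generated subgroup of $G = G_1\ast_H G_2$ that fails to be finitely presented, and then conclude with Theorem~\ref{notfinpresented} of B.~Neumann. The witness will be $K := \langle K_1, K_2\rangle \leq G$, and the heart of the argument is to identify $K$ with the amalgamated product $K_1 \ast_L K_2$, where $L$ is the common intersection $K_i \cap H$.

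First I would fix notation. Let $\beta := \restr{\alpha_1}{H} = \restr{\alpha_2}{H}\colon H \to \ZZ$; this is non-trivial by hypothesis. Let $L := \ker\beta$. Since $\alpha_i$ restricts to $\beta$ on $H$, we have $K_i \cap H = L$ for $i = 1,2$. The first key observation is that $L$ is \emph{not} finitely generated. Indeed, $\beta(H) = d\ZZ$ for some integer $d \geq 1$, so $\beta/d \colon H \to \ZZ$ is a surjection with the same kernel $L$; were $L$ finitely generated, $\beta/d$ would be an algebraic fibering of $H$, contrary to assumption.

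Next I would pin down the subgroup $K$. Using the standard fact that $G_1 \cap G_2 = H$ inside $G_1 \ast_H G_2$, we get $K_1 \cap K_2 \subseteq G_1 \cap G_2 = H$, and hence $K_1 \cap K_2 = (K_1 \cap H)\cap(K_2\cap H) = L$. The inclusions $K_1 \hookrightarrow G$ and $K_2\hookrightarrow G$ agree on $L$, so they induce a homomorphism $\psi\colon K_1 \ast_L K_2 \to G$ whose image is exactly $K$. To see that $\psi$ is injective I would appeal to the normal form theorem for amalgamated products: since $K_i \setminus L \subseteq G_i \setminus H$, a word in $K_1 \ast_L K_2$ that is reduced with respect to the factors $K_1, K_2$ maps under $\psi$ to a word in $G_1 \ast_H G_2$ that is reduced with respect to $G_1, G_2$, and therefore represents a non-trivial element. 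Thus $\psi$ is injective and $K \cong K_1 \ast_L K_2$.

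To finish: $K_1$ and $K_2$ are finitely generated by hypothesis, so $K = \langle K_1, K_2\rangle$ is finitely generated; and $L = K_1 \cap K_2$ is not finitely generated by the first step. Theorem~\ref{notfinpresented} then says $K = K_1 \ast_L K_2$ is not finitely presented, so $K$ is a witness to the incoherence of $G$. The only step that needs real care is the identification $K \cong K_1 \ast_L K_2$ --- that is, verifying that $\langle K_1,K_2\rangle$ is a genuine amalgam rather than a proper quotient of one --- and this is exactly what the normal form computation, together with the identity $G_1 \cap G_2 = H$ and the equalities $K_i\cap H = L$, provides.
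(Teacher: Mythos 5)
Your proof is correct and follows essentially the same route as the paper's: identify $L = K_1\cap H = K_2\cap H$ as an infinitely generated kernel (since $H$ does not algebraically fiber), recognize $\langle K_1,K_2\rangle$ as $K_1\ast_L K_2$, and invoke Neumann's theorem. You in fact supply more detail than the paper does at the one delicate point --- verifying via normal forms that $\langle K_1,K_2\rangle$ really is the amalgam $K_1\ast_L K_2$ --- which the paper simply asserts.
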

	\begin{proof}
		We know that $K_i\cap H$ is also the kernel of a homomorphism to $\ZZ$ and so is infinitely generated since $H$ does not algebraically fiber. Furthermore since $\restr{\alpha_1}{H} = \restr{\alpha_2}{H}$ we know that $K_1\cap H = K_2\cap H = L$. Let $N$ be the subgroup of $G$ generated by $K_1$ and $K_2$. We can write $N$ as an amalgamated free product $K_1\ast_{L}K_2$. This is the amalgamated free product of two finitely generated groups over an infinitely generated group and so is not finitely presented by Theorem \ref{notfinpresented}.
	\end{proof}
	
	\begin{theorem}\label{thm:incohwithhomology}
		Let $G_i = H\rtimes_{\phi_i}\ZZ$. Assume that $H$ is finitely generated and does not algebraically fiber. Let $G = G_1\ast_HG_2$. If $H^1(G; \RR)$ has rank $\geq 3$, then $G$ is incoherent. 
	\end{theorem}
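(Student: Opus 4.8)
The plan is to reduce the statement to Lemma~\ref{lem:construction} by producing homomorphisms $\alpha_i\colon G_i\to\ZZ$ with $\restr{\alpha_1}{H}=\restr{\alpha_2}{H}$ non-trivial and with $\ker\alpha_i$ finitely generated. The rank hypothesis will be used to find a single non-zero class on $H$ that extends over both $G_1$ and $G_2$, and finite generation of the kernels will come from perturbing the canonical fibrations $G_i\to\ZZ$ inside the (open) BNS invariant.

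First I would pin down where $\mathrm{rk}\,H^1(G;\RR)\ge 3$ is used. Write $\pi_i\colon G_i\to\ZZ$ for the projection with kernel $H$ and $r_i\colon H^1(G_i;\RR)\to H^1(H;\RR)$ for restriction. Because $H$ is normal in $G_i$ with $G_i/H\cong\ZZ$ acting through $\phi_i$, one checks (the $H^1$ analogue of Lemma~\ref{lem:homologyoffreebyfree}: a class extends over $G_i$ iff it is $\phi_i$-invariant, by reading off the presentation in that lemma) that $\mathrm{im}(r_i)=V_i$, the $\phi_i$-fixed subspace of $H^1(H;\RR)$, and $\ker r_i=\RR\pi_i$. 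The Mayer--Vietoris sequence for $G=G_1\ast_H G_2$ gives an injection $H^1(G;\RR)\hookrightarrow H^1(G_1;\RR)\oplus H^1(G_2;\RR)$ with image $\{(\chi_1,\chi_2):r_1\chi_1=r_2\chi_2\}$; sending $(\chi_1,\chi_2)$ to $r_1\chi_1$ then has image $V_1\cap V_2$ and kernel $\RR\pi_1\oplus\RR\pi_2$, so $\dim H^1(G;\RR)=2+\dim(V_1\cap V_2)$. Hence $V_1\cap V_2\neq 0$. Since each $V_i$ is the kernel of the integral matrix $\Phi_i-I$ on cohomology, $V_1\cap V_2$ is a non-zero rational subspace, so I may pick a non-zero $\psi\in H^1(H;\QQ)$ lying in it and lift it to rational classes $\tilde\psi_i\in H^1(G_i;\QQ)$ with $\restr{\tilde\psi_i}{H}=\psi$, using surjectivity of $r_i$ onto $V_i$.

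Next I would build the fibrations. For each $i$, $\ker\pi_i=H$ is finitely generated, so $\pm\pi_i\in\Sigma^1(G_i)$. In the character sphere $S(G_i)$ the ray $[\pi_i+t\,\tilde\psi_i]$ tends to $[\pi_i]$ as $t\to0^+$ and its antipode tends to $[-\pi_i]$; since $G_1,G_2$ are finitely generated and $\Sigma^1(G_i)$ is open (Theorem~\ref{thm:bnsopen}), there is $\varepsilon>0$ so that for every rational $t\in(0,\varepsilon)$ both $[\pi_i+t\tilde\psi_i]$ and its antipode lie in $\Sigma^1(G_i)$ for $i=1,2$, whence $\ker(\pi_i+t\tilde\psi_i)$ is finitely generated. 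Fixing such a $t$ and a positive integer $c$ clearing the denominators of $t\tilde\psi_1$ and $t\tilde\psi_2$, set $\alpha_i:=c\pi_i+ct\,\tilde\psi_i\colon G_i\to\ZZ$; then $\ker\alpha_i$ is finitely generated and $\restr{\alpha_1}{H}=ct\psi=\restr{\alpha_2}{H}\neq0$. Lemma~\ref{lem:construction} now yields incoherence of $G$. The only genuinely delicate points are the Mayer--Vietoris computation that converts the rank hypothesis into $V_1\cap V_2\neq0$, and the rationality bookkeeping needed so that the two perturbed characters restrict to the \emph{same} integral character on $H$; the openness step is then routine.
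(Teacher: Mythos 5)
Your proposal is correct and follows essentially the same route as the paper: perturb the canonical fibrations $\pi_i$ of $G_i=H\rtimes_{\phi_i}\ZZ$ by a small rational multiple of a class that is non-trivial on $H$ and agrees on $H$ from both sides, invoke openness of $\Sigma^1(G_i)$ to keep the kernels finitely generated, and feed the result into Lemma~\ref{lem:construction}. The only difference is cosmetic: where the paper simply takes a class $\gamma\in H^1(G;\RR)$ outside the span of the two exponent-sum classes and restricts it to $G_1$ and $G_2$, you reconstruct the same class via the Mayer--Vietoris identification $\dim H^1(G;\RR)=2+\dim(V_1\cap V_2)$, which makes explicit why the rank hypothesis forces $\gamma|_H\neq 0$.
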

	\begin{proof}
		Let $G_1 = \langle H, s\rangle$ and $G_2 = \langle H, t\rangle$. Let $\alpha_s$ be the homomorphism $G\to \ZZ$ be defined by counting the exponent sum of $s$, define $\alpha_t$ similarly. Since $H^1(G;\RR)$ has rank $\geq 3$, there is another class $\gamma\colon G\to \ZZ$ which is not in the span of $\alpha_s$ and $\alpha_t$. 
		
		We now use the BNS invariant $\Sigma^1(G_i)$ to find other fiberings of $G_i= H \rtimes_{\phi_i} \ZZ$. Note that $\Sigma^1(G_i)$ is not empty since $H$ is finitely generated. 
		
		Consider the homomorphisms $\beta_1 = a\gamma|_{G_1} + \alpha_s|_{G_1}$ and $\beta_2 = a\gamma|_{G_2} + \alpha_t|_{G_2}$. These define two homomorphisms from $G_i\to\RR$. If we pick $a$ to be rational we can assume that the images of $\beta_1, \beta_2$ are cyclic subgroups. 
		
		Since the BNS invariant is an open subset of the character sphere, we can take $a$ small enough so that the kernels of $\beta_1$ and $\beta_2$, respectively, are finitely generated subgroups $K_1$ and $K_2$ of $G_1$ and $G_2$ respectively.
		
		By construction $K_1\cap H = \ker(\restr{\beta_1}{H}) = \ker(\restr{a\gamma}{H}) = \ker(\restr{\beta_2}{H}) = K_2\cap H$. Since $H$ doesn't algebraically fiber $K_1\cap H$ is not finitely generated.  Thus we are in the situation of Lemma \ref{lem:construction} and there is an finitely generated infinitely presented subgroup of $G$.  
	\end{proof}

	Note here that $G$ can also be written as $H \rtimes F_2$, and we are requiring that $G$ has one more map to $\ZZ$ than comes from the natural map to $F_2$.   Recall that if $G = H \rtimes F_k$, we say that $G$ has {\it excessive homology} if $rk (H_1(G; \RR )) \geq k+1$.

	\begin{theorem} Let $G = H \rtimes Q$ where $rk(H^1(G;\mathbb{R})) > rkH^1(Q; \mathbb{R})$, where $H$ is finitely generated and does not algebraically fiber and $Q$ contains a non-abelian free subgroup.   Then $G$ is incoherent. \end{theorem}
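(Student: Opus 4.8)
The plan is to reduce to Theorem~\ref{thm:incohwithhomology} by finding a copy of $H \rtimes F_2$ inside $G$ that still has excessive homology, and whose free quotient is realized by characters pulled back from $G$. First I would pick a non-abelian free subgroup $F_2 = \langle s, t\rangle \le Q$; pulling back the extension along $F_2 \hookrightarrow Q$ gives a subgroup $G' = H \rtimes F_2 \le G$. Writing $G'$ as $G_1 \ast_H G_2$ with $G_1 = \langle H, s\rangle$ and $G_2 = \langle H, t\rangle$, Theorem~\ref{thm:incohwithhomology} tells us that $G'$ (and hence $G$, since any group containing an incoherent group is incoherent) is incoherent provided $rk\,H^1(G'; \RR) \ge 3$. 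So the whole argument comes down to arranging that inequality.

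The homological input is $rk\,H^1(G; \RR) > rk\,H^1(Q; \RR)$. By the Lyndon--Hochschild--Serre five-term exact sequence for $1 \to H \to G \to Q \to 1$,
\[
0 \to H^1(Q; \RR) \to H^1(G; \RR) \to H^1(H; \RR)^Q \to H^2(Q; \RR),
\]
the hypothesis forces $H^1(H; \RR)^Q \neq 0$; that is, the $Q$-coinvariants (equivalently, since we are over $\RR$, the invariants) of $H_1(H; \RR)$ are nonzero, so there is a nonzero class $\gamma_0 \in H^1(H; \RR)$ fixed by the whole $Q$-action. The key point is that a $Q$-invariant class is in particular invariant under the images $\Phi_s, \Phi_t$ of $s, t$ on $H_1(H; \RR)$, so by the computation in Lemma~\ref{lem:homologyoffreebyfree} (applied to $G' = H \rtimes F_2$) the class $\gamma_0$ survives to a nonzero class $\gamma \in H^1(G'; \RR)$. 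Since $\gamma$ restricts nontrivially to $H$, it is not a linear combination of $\alpha_s$ and $\alpha_t$ (which vanish on $H$), so $rk\,H^1(G'; \RR) \ge 3$. Then Theorem~\ref{thm:incohwithhomology} applies to $G' = G_1 \ast_H G_2$ and yields a finitely generated, infinitely presented subgroup of $G' \le G$.

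The main obstacle is making sure the surviving class $\gamma$ on $H_1(H;\RR)$ is genuinely extra, i.e.\ genuinely independent from $\alpha_s, \alpha_t$ when passed to $H^1(G';\RR)$; this is exactly why one wants $\gamma_0$ to restrict nontrivially to $H$, and why one extracts $\gamma_0$ from the $Q$-invariants of $H^1(H;\RR)$ rather than directly from $H^1(G;\RR)$ (a class coming from $H^1(Q;\RR)$ would die on $H$). One small technical care: Theorem~\ref{thm:incohwithhomology} as stated wants the two $\ZZ$-extensions $G_i = H \rtimes_{\phi_i} \ZZ$, which is automatic here since $F_2 = \langle s \rangle \ast \langle t \rangle$ makes $G_1 = H \rtimes_{\phi_s} \ZZ$ and $G_2 = H \rtimes_{\phi_t} \ZZ$; and $H$ still does not algebraically fiber, by hypothesis, so all the ingredients of that theorem are in place. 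Everything else — openness of the BNS invariant, Neumann's lemma — is inherited from the proof of Theorem~\ref{thm:incohwithhomology} verbatim.
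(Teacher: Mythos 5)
Your proposal is correct and follows essentially the same route as the paper: pass to $N = H\rtimes F_2$ for a non-abelian free subgroup $F_2\le Q$, check that $N$ has excessive homology, and invoke Theorem~\ref{thm:incohwithhomology}. The paper gets the rank bound more directly, observing that $rk\,H^1(G;\RR)>rk\,H^1(Q;\RR)$ yields a class $\gamma\in H^1(G;\RR)$ with $\gamma|_H\neq 0$ whose restriction to $N$ is already the third independent class; your detour through the five-term sequence reaches the same place, but the step ``the invariant class $\gamma_0$ survives by Lemma~\ref{lem:homologyoffreebyfree}'' should really be justified by the surjectivity of the restriction $H^1(N;\RR)\to H^1(H;\RR)^{F_2}$ (coming from $H^2(F_2;\RR)=0$), since an invariant class can in general die in the coinvariant quotient appearing in that lemma.
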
 

\begin{proof} The condition that $rk(H^1(G;\mathbb{R}) > rkH^1(Q; \mathbb{R})$ can be rephrased as there is a map $\gamma$ from $G$ to $\ZZ$ such that an element of $H$ has non-trivial image. Consider the subgroup of $G$ given by $N = H\rtimes F_2$ where the $F_2$ is a non-abelian free subgroup of $Q$. We see that $N$ has excessive homology from the restriction of $\gamma$ to $N$. We can now appeal to Theorem \ref{thm:incohwithhomology} deducing that $N$ is incoherent and hence so is $G$. 
	\end{proof}

	We have the following immediate corollary.  
	
	\begin{theorem} \label{thm:excessive}  Let $G = H \rtimes F_k$, where $H$ is finitely generated and does not algebraically fiber.   If $G$ has excessive homology, then $G$ is incoherent. \end{theorem}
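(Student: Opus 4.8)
The plan is to obtain this as an immediate corollary of the theorem stated just before it, applied with $Q = F_k$. So I would argue as follows. We may as well assume $k \geq 2$, which is part of the standing hypothesis on $G = H\rtimes F_k$ (and is what makes the notion of excessive homology interesting). Since the quotient map $G \to F_k$ is surjective, the induced pullback $H^1(F_k;\RR)\to H^1(G;\RR)$ is injective, and $rk(H^1(F_k;\RR)) = k$. The hypothesis that $G$ has excessive homology says precisely that $rk(H^1(G;\RR)) \geq k+1 > k = rk(H^1(F_k;\RR))$. Moreover $F_k$, being a non-abelian free group, contains a non-abelian free subgroup (itself). Since $H$ is finitely generated and does not algebraically fiber by hypothesis, all the hypotheses of the preceding theorem are met with $Q = F_k$, and it yields that $G$ is incoherent.

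If one prefers a derivation that directly exhibits the witness, I would instead choose a rank-two free subgroup $\langle s, t\rangle \leq F_k$ and set $N = H\rtimes\langle s,t\rangle$, which splits as an amalgam $(H\rtimes_{\phi_s}\ZZ)\ast_H(H\rtimes_{\phi_t}\ZZ)$. The excessive-homology hypothesis furnishes a class $\gamma\colon G\to\ZZ$ whose restriction to $H$ is nontrivial; since $H \leq N$, the restriction $\restr{\gamma}{N}$ is nontrivial on $H$ and hence is not in the span of the two coordinate classes $\restr{\alpha_s}{N}, \restr{\alpha_t}{N}$, which vanish on $H$. Therefore $rk(H^1(N;\RR)) \geq 3$, so Theorem \ref{thm:incohwithhomology} applies to $N$ and produces a finitely generated, infinitely presented subgroup of $N\leq G$. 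Since any group containing an incoherent subgroup is incoherent, $G$ is incoherent.

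The only point requiring care — and it is not really an obstacle — is verifying that the ``extra'' homology class genuinely detects an element of the fiber $H$ rather than merely pairing with $F_k$; this is immediate from the injectivity of $H^1(F_k;\RR)\hookrightarrow H^1(G;\RR)$ together with the strict inequality of ranks. All the genuine work, namely the BNS perturbation argument producing finitely generated kernels $K_1, K_2$ of characters on the two vertex groups with $K_1\cap H = K_2\cap H$ infinitely generated, together with Neumann's Theorem \ref{notfinpresented}, has already been carried out in Lemma \ref{lem:construction} and Theorem \ref{thm:incohwithhomology}. Hence the present statement really is an immediate corollary.
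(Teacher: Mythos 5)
Your proposal is correct and matches the paper's own treatment: the paper states Theorem \ref{thm:excessive} as an ``immediate corollary'' of the preceding theorem, obtained exactly as you do by taking $Q = F_k$ and noting that excessive homology gives $rk(H^1(G;\RR)) \geq k+1 > k = rk(H^1(F_k;\RR))$ while $F_k$ ($k\geq 2$) supplies the non-abelian free subgroup. Your second, more explicit derivation via Theorem \ref{thm:incohwithhomology} is also precisely what the proof of that preceding theorem does internally, so there is no divergence from the paper's route.
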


	We now detail some consequences of Theorems \ref{thm:incohwithhomology} and \ref{thm:excessive}. 
	
	\begin{cor} \label{cor:fibering} 
		Let $M_1, M_2$ be two fibered 3-manifolds with isomorphic fiber $S_g, g\geq 2$. Let $X = M_1\cup _{S_g}M_2$. Suppose that $H^1(X;\RR)$ has rank $\geq 3$. Then $\pi_1(X)$ is incoherent. 
	\end{cor}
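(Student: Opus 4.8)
The plan is to recognize $\pi_1(X)$ as an amalgamated free product of two surface-by-$\ZZ$ groups and then to quote Theorem \ref{thm:incohwithhomology} directly, with $H = \pi_1(S_g)$.

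First I would record the structure of the vertex groups. Since $M_i$ fibers over the circle with fiber $S_g$, the long exact sequence of the fibration $S_g \hookrightarrow M_i \to S^1$ gives a short exact sequence $1 \to \pi_1(S_g) \to \pi_1(M_i) \to \ZZ \to 1$, which splits because the base is $S^1$; hence $\pi_1(M_i) \cong \pi_1(S_g) \rtimes_{\phi_i} \ZZ$, where $\phi_i$ is the monodromy automorphism, and in particular each fiber inclusion $\pi_1(S_g) \hookrightarrow \pi_1(M_i)$ is $\pi_1$-injective.

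Next I would compute $\pi_1(X)$. Treating $X = M_1 \cup_{S_g} M_2$ as a graph of spaces with vertex spaces $M_1$ and $M_2$, a single edge space $S_g$, and edge maps the two fiber inclusions, van Kampen's theorem together with the injectivity noted above gives $\pi_1(X) \cong \pi_1(M_1) \ast_{\pi_1(S_g)} \pi_1(M_2)$. This is exactly $G = G_1 \ast_H G_2$ in the notation of Theorem \ref{thm:incohwithhomology}, with $H = \pi_1(S_g)$ and $G_i = \pi_1(M_i) = H \rtimes_{\phi_i} \ZZ$ (equivalently, $\pi_1(X) = \pi_1(S_g) \rtimes F_2$, which has excessive homology precisely when $rk(H^1(X;\RR)) \geq 3$).

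It remains to check the hypotheses of Theorem \ref{thm:incohwithhomology}. The group $H = \pi_1(S_g)$ is finitely generated, and since $g \geq 2$ it does not algebraically fiber, as recorded in Section \ref{sec:fiber} (a closed surface group of genus $\geq 2$ surjects onto $\ZZ$ only with infinitely generated kernel); and $H^1(G;\RR) = H^1(X;\RR)$ has rank $\geq 3$ by hypothesis, since first cohomology depends only on $\pi_1$. Theorem \ref{thm:incohwithhomology} then applies and shows that $\pi_1(X)$ is incoherent. The only step carrying content beyond that theorem is the van Kampen identification of $\pi_1(X)$ as the amalgam over the fiber; since fibers of fibrations are automatically $\pi_1$-injective this is routine, so I do not anticipate a genuine obstacle — indeed, one could dispense with the topology entirely by simply taking $\pi_1(X) \cong (\pi_1(S_g) \rtimes \ZZ) \ast_{\pi_1(S_g)} (\pi_1(S_g) \rtimes \ZZ)$ as the hypothesis.
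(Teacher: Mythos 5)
Your proposal is correct and follows exactly the paper's route: the paper's own proof is the one-line observation that closed surface groups of genus $\geq 2$ do not algebraically fiber, so Theorem \ref{thm:incohwithhomology} applies to $\pi_1(X) = \pi_1(M_1)\ast_{\pi_1(S_g)}\pi_1(M_2)$. Your additional details (the splitting $\pi_1(M_i)\cong \pi_1(S_g)\rtimes_{\phi_i}\ZZ$, the van Kampen identification, and the remark that $H^1$ with $\RR$ coefficients depends only on $\pi_1$) are all accurate and simply make explicit what the paper leaves implicit.
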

	\begin{proof}
		As noted previously fundamental groups of surfaces do not algebraically fiber. Thus we can apply Theorem \ref{thm:incohwithhomology}.
	\end{proof}
	
	\begin{cor} \label{cor:freebyfree} Let $G = F_m \rtimes F_n$, $m, n\geq 2$.  If  $H^1(X;\RR)$ has rank $\geq n +1$, then $G$ is incoherent. \end{cor}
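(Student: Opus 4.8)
The plan is to deduce this as a special case of Theorem \ref{thm:excessive}, taking $H = F_m$ and $k = n$. Two hypotheses of that theorem need checking. First, $F_m$ is evidently finitely generated. Second, $F_m$ does not algebraically fiber for $m \geq 2$: this was recorded in Section \ref{sec:fiber}, the point being that a nontrivial homomorphism $F_m \to \ZZ$ necessarily has infinitely generated kernel, since a nontrivial finitely generated normal subgroup of a free group has finite index, which rules out an infinite cyclic quotient.

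It then remains only to match up the homological hypothesis. Since $rk(H^1(F_n; \RR)) = n$, the assumption that $H^1(G; \RR)$ have rank at least $n+1$ is precisely the statement that the free-by-free group $G = F_m \rtimes F_n$ has excessive homology in the sense defined just before Theorem \ref{thm:excessive}. Moreover $n \geq 2$ guarantees that the acting group $F_n$ contains a non-abelian free subgroup (namely itself), which is what is needed for the reduction used in proving that theorem. With all hypotheses in place, Theorem \ref{thm:excessive} applies directly and yields that $G$ is incoherent.

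I do not expect any genuine obstacle: the content is entirely contained in Theorem \ref{thm:excessive}, and hence in Theorem \ref{thm:incohwithhomology}, where one uses the openness of the BNS invariant (Theorem \ref{thm:bnsopen}) to perturb the two obvious fiberings of the vertex groups $F_m \rtimes_{\phi_i} \ZZ$ into new ones whose kernels still restrict to the same map on $F_m$; that common intersection is infinitely generated exactly because $F_m$ does not algebraically fiber, so B.~Neumann's Theorem \ref{notfinpresented} produces a finitely generated, infinitely presented subgroup, witnessing incoherence. If one wanted an explicit witness rather than an appeal to Theorem \ref{thm:excessive}, it would take the amalgamated-product form illustrated in the $F_2 \times F_2$ discussion of the introduction, built from suitably sheared fibers of the two $F_m \rtimes \ZZ$ factors.
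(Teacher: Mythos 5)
Your proposal is correct and matches the paper exactly: the corollary is stated there as an immediate specialization of Theorem \ref{thm:excessive} with $H = F_m$ and $k = n$, using that free groups of rank $\geq 2$ are finitely generated and do not algebraically fiber, so the rank condition on $H^1$ is precisely excessive homology. Your added justification that $F_m$ does not fiber (a nontrivial finitely generated normal subgroup of a free group has finite index) is a correct filling-in of a fact the paper simply asserts in Section \ref{sec:fiber}.
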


	\begin{thm} \label{thm:cube} Let $G = H \rtimes F_n$, $n \geq 2$  where $H$ is either a closed hyperbolic surface group or a free group of rank $\geq 2$.   If $G$ is hyperbolic and virtually special,  then $G$ is incoherent.  \end{thm}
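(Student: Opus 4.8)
The plan is to reduce to Theorem~\ref{thm:excessive} by passing to a well-chosen finite-index subgroup. Since incoherence passes to finite-index overgroups --- a finitely generated, infinitely presented subgroup of $G'\leq G$ is still such a subgroup of $G$ --- it suffices to exhibit a finite-index subgroup $G'\leq G$ that has excessive homology and is of the form $H'\rtimes F_{n'}$ with $H'$ finitely generated and not algebraically fibering. The first observation is that \emph{every} finite-index $G'\leq G$ automatically has the right shape: setting $H'=G'\cap H$, this is normal in $G'$ (it is the kernel of the restriction to $G'$ of the projection $G\to G/H=F_n$), the quotient $G'/H'$ embeds in $F_n$ as a finite-index, hence free, subgroup $F_{n'}$ with $n'\geq n\geq 2$ by Nielsen--Schreier, and the extension $1\to H'\to G'\to F_{n'}\to 1$ splits because $F_{n'}$ is free. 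Moreover $H'$, being finite index in $H$, is again a closed hyperbolic surface group or a free group of rank $\geq 2$, so in either case it is finitely generated and, as recorded in Section~\ref{sec:fiber}, does not algebraically fiber.

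It remains to arrange excessive homology, i.e.\ to produce a class in $H^1(G';\RR)$ that does not come from $H^1(F_{n'};\RR)$. This is where the hypotheses enter. Choose any infinite-order element $g\in H$ (such elements exist, as $H$ is infinite and torsion-free), and let $Q=\langle g\rangle\leq H\leq G$. Since $G$ is hyperbolic, $Q$ is quasiconvex; since $G$ is hyperbolic and virtually special, the Haglund--Wise canonical completion-and-retraction construction shows that quasiconvex subgroups of $G$ are virtual retracts. Applying this to $Q$, there is a finite-index subgroup $G'\leq G$ containing a finite-index subgroup $\langle g^k\rangle$ of $Q$ together with a retraction $r\colon G'\twoheadrightarrow\langle g^k\rangle\cong\ZZ$; in particular $r(g^k)\neq 0$. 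Writing $G'=H'\rtimes F_{n'}$ as above, the class $[r]\in H^1(G';\RR)$ restricts nontrivially to $H'=G'\cap H$ (it is nonzero on $g^k\in H'$), hence $[r]$ does not lie in the image of the inflation $H^1(F_{n'};\RR)\hookrightarrow H^1(G';\RR)$, every element of which vanishes on $H'$. Therefore $rk(H^1(G';\RR))\geq n'+1$, so $G'$ has excessive homology.

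Finally, Theorem~\ref{thm:excessive} applies to $G'=H'\rtimes F_{n'}$: $H'$ is finitely generated and does not algebraically fiber, $n'\geq 2$, and $G'$ has excessive homology, so $G'$ is incoherent, and therefore so is $G$. The only ingredient beyond Theorem~\ref{thm:excessive} is the virtual retraction property for quasiconvex (here, cyclic) subgroups of hyperbolic virtually special groups; this is where both hypotheses ``hyperbolic'' and ``virtually special'' are genuinely used, and it is the point to invoke carefully (Haglund--Wise) rather than the main conceptual difficulty, which has already been handled in Theorem~\ref{thm:excessive}. The one routine check to record is that quasiconvex subgroups of $G$ survive as quasiconvex subgroups of the given finite-index special subgroup, so that the Haglund--Wise statement, as stated for compact special groups, can be applied and then pushed back up to $G$.
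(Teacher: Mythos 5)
Your proposal is correct and follows essentially the same route as the paper: use hyperbolicity plus virtual specialness to obtain a virtual retraction onto an infinite cyclic subgroup of $H$, observe that the resulting class in $H^1$ of the finite-index subgroup $G'=H'\rtimes F_{n'}$ does not vanish on $H'$ and hence gives excessive homology, and then apply Theorem~\ref{thm:excessive}. The only cosmetic difference is that you invoke the Haglund--Wise canonical completion and retraction directly, whereas the paper routes the virtual retract property through embedding in a right-angled Artin group and Haglund's retraction theorem; your version also usefully spells out the routine checks (the form of $G'$, Nielsen--Schreier, non-fibering of $H'$) that the paper leaves implicit.
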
 
	
	\begin{proof} Since $G$ is virtually special, it is virtually a subgroup of a right-angled Artin group  by \cite{HaglundWise}.  Such groups virtually retract onto their quasi-convex subgroups \cite[Theorem F]{Haglund}. An infinite cyclic subgroup of a hyperbolic group is quasi-convex. Now consider a virtual retraction onto some infinite cyclic subgroup of $H < H \rtimes F_n$.  This is a map from $G' = H' \rtimes F_l  \rightarrow \mathbb{Z}$ which illustrates that $G'$ has excessive homology, since it is not a linear combination of the maps which are retracts to cyclic subgroups of $F_l$.  Then by Theorem \ref{thm:excessive}, $G'$ is incoherent and so is $G$. \end{proof} 
	
	In the case that $G$ above is hyperbolic, cubulation is  sufficient hypothesis. Indeed, if a group is hyperbolic and $\CAT(0)$ cubulated, it virtually acts co-specially on a $\CAT(0)$ cube complex by \cite{Agol}.  By the work of Gersten \cite{Ger}  one can show that there are $F_m \rtimes F_n$ groups with $m,n \geq 2$ which are not $\CAT(0)$.
	
	\begin{cor}
		Suppose that $G = H\rtimes F_k$ where $H^1(H;\RR) = \RR$. Suppose further that $H$ is finitely generated and does not algebraically fiber. Then $G$ is incoherent. 
	\end{cor}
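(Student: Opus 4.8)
The plan is to deduce this from Theorem~\ref{thm:excessive}, after replacing $G$ by a finite-index subgroup if necessary; the only thing to check is that the hypothesis $H^1(H;\RR)=\RR$ forces $G$, or a subgroup of index at most two, to have excessive homology. Throughout I assume $k\ge 2$, as elsewhere in this section (for $k=1$ the statement is false, e.g. $F_m\rtimes\ZZ$).

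First I would pin down the monodromy action on cohomology. Since $H$ is finitely generated, $H_1(H;\ZZ)$ is a finitely generated abelian group whose torsion subgroup $T$ is characteristic, and $H^1(H;\RR)=\Hom(H_1(H;\ZZ)/T,\RR)$; the hypothesis says this is $\RR$, so $H_1(H;\ZZ)/T\cong\ZZ$. Writing $\Phi_1,\dots,\Phi_k$ for the automorphisms of $H^1(H;\RR)$ induced by a free basis of $F_k$, each $\Phi_i$ is induced by an automorphism of $H$, hence by an automorphism of $H_1(H;\ZZ)/T\cong\ZZ$, hence is multiplication by $\pm1$ on $H^1(H;\RR)$. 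Consequently the $F_k$-action on $H^1(H;\RR)$ is through a homomorphism $\rho\colon F_k\to\{\pm1\}$.

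Next I would pass to $F'=\ker\rho\le F_k$. Then $F'$ has index $1$ or $2$ in $F_k$, and by the Nielsen--Schreier index formula it is free of rank $l=1+[F_k:F'](k-1)\ge 2$ (this is where $k\ge 2$ is used). Set $G'=H\rtimes F'$, a finite-index subgroup of $G$ of the shape $H\rtimes F_l$, with $H$ the same finitely generated group that does not algebraically fiber. Since $F'$ acts trivially on $H^1(H;\RR)$, all of the maps $\Phi_i-I$ appearing in the cohomological version of Lemma~\ref{lem:homologyoffreebyfree} vanish, so $H^1(G';\RR)=\RR^l\times H^1(H;\RR)=\RR^{l+1}$. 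Thus $rk\,H^1(G';\RR)=l+1$, i.e.\ $G'$ has excessive homology, and Theorem~\ref{thm:excessive} applies to give that $G'$ is incoherent. As any group containing an incoherent group is incoherent, $G$ is incoherent as well.

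I do not expect a genuine obstacle here: the argument is essentially the bookkeeping observation that a one-dimensional $H^1(H;\RR)$ leaves the monodromy no room to kill the extra cohomology class coming from $H$. The one point needing a little care is that $F'$ must again be a nonabelian free group so that Theorem~\ref{thm:excessive} is applicable to $G'$, which is precisely why $k\ge 2$ is needed.
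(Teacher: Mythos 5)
Your argument is correct and is essentially the paper's own proof: both pass to the kernel of the $\pm1$-action of $F_k$ on the one-dimensional space $H^1(H;\RR)$, an index-at-most-two subgroup, observe via Lemma \ref{lem:homologyoffreebyfree} that the resulting $H\rtimes F_l$ has excessive homology, and invoke Theorem \ref{thm:excessive}. You supply some details the paper elides (why the action factors through $\{\pm1\}$, and the Nielsen--Schreier bookkeeping ensuring $l\ge 2$), but the approach is the same.
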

	
	\begin{proof} The action of $F_k$ on $H^1(H;\RR)$ will leave the one-dimensional vector space  $H^1(H; \RR)$ invariant.  There is an index two subgroup of $G$ such that the induced action of an index 2 subgroup of $F_k$ is the identity on $H^1(H;\RR)$.  Thus there is an index 2 subgroup with excessive homology by Lemma \ref{lem:homologyoffreebyfree}.  Then by Theorem \ref{thm:excessive}, $G$ is incoherent. 
	\end{proof} 
	For example, if $|m|\neq |n|$, then $G = BS(n, m)\rtimes F_k$ is incoherent since $BS(n,m)$ has one-dimensional 1st cohomology.

	\section[Fibering of finitely generated-by-free groups]{Fibering of $H$-by-free and related groups}

	Note that there are a lot of different ways to get witnesses to incoherence.  For example, we could have followed the proof of Theorem \ref{thm:incohwithhomology} replacing 2 maps by $k$ maps.   In addition, with some care as to the choice of map $\gamma$ in the proof of Theorem \ref{thm:incohwithhomology} we can arrange that $G$ virtually algebraically fibers. We are stating this as a separate theorem, since neither directly implies the other.  However, if $G$ has cohomological dimension 2 and non-zero Euler characteristic, then a result of Bieri \cite{Bieri} combined with Theorem \ref{thm:vf} will show that $G$ is incoherent. 
	
	\begin{theorem} \label{thm:vf} Let $G = H \rtimes F_k$.  If $H$ is finitely generated and $G$ has excessive homology, then $G$ algebraically fibers. \end{theorem}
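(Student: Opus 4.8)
The plan is to show that $G = H \rtimes F_k$ with $H$ finitely generated and excessive homology admits a character $\chi\colon G \to \ZZ$ lying in $\Sigma^1(G) \cap (-\Sigma^1(G))$, so that by \cite[Corollary A4.3]{Strebelnotes} the kernel of $\chi$ is finitely generated. First I would fix the natural surjection $\pi\colon G \to F_k$ and the induced maps $\alpha_1,\dots,\alpha_k\colon G \to \ZZ$ obtained by composing $\pi$ with the abelianization $F_k \to \ZZ^k$ (exponent-sum of each free generator). Each $\alpha_i$ has kernel containing $H$ together with the other $t_j$'s, hence not finitely generated in general; the point of excessive homology is precisely that $H^1(G;\RR)$ is strictly larger than $\mathrm{span}(\alpha_1,\dots,\alpha_k)$, so there is a class $\gamma\colon G\to\ZZ$ whose restriction to $H$ is nontrivial.

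The key step is to combine $\gamma$ with a suitable integral combination of the $\alpha_i$ to land in $\Sigma^1(G)$. The character $\chi = \gamma + \sum_i n_i \alpha_i$ restricted to $H$ equals $\restr{\gamma}{H}$, which is a fixed nonzero (hence, after scaling, surjective onto a cyclic group) homomorphism $H \to \ZZ$ with finitely generated image but \emph{not} finitely generated kernel (since $H$ does not algebraically fiber — wait, that hypothesis is not assumed here, so I must argue differently). Instead I would use the BNS machinery directly on $G$: the character $\chi$ sends each $t_i$ to $n_i + \gamma(t_i)$, and by choosing the integers $n_i$ to grow rapidly (so that $|\chi(t_i)|$ dominates the word-length contributions of $H$ and of the relators $t_i a_j t_i^{-1} = \phi_i(a_j)$), one forces the Cayley graph of $G$ with respect to the generating set $\{a_1,\dots,a_r,t_1,\dots,t_k\}$ restricted to $\chi \geq 0$ to be connected, and likewise for $\chi \leq 0$. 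Concretely: any edge labelled $t_i$ changes $\chi$ by a large amount in a fixed direction, any edge labelled $a_j$ changes $\chi$ by the bounded amount $\gamma(a_j)$; the defining relators let one push all the $a_j$-edges to a region where $\chi$ is as large (resp. small) as desired by first applying enough $t_i$'s, so every vertex with $\chi \geq 0$ connects within the half-space to a fixed base ray. This is the standard "dominating character" argument showing that an appropriate perturbation of a fibered character by a large multiple of a fibered character of the quotient is again fibered.

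The cleanest route, and the one I would actually write, is to invoke Theorem \ref{thm:strongfiber} (the fiber theorem stated in the introduction): $G$ fits into $1 \to H \to G \to F_k \to 1$ with $H$ finitely generated, $F_k$ generated by $k$ elements, and $rk\,H^1(F_k;\RR) = k$; excessive homology says exactly $rk\,H^1(G;\RR) > k$; hence by Theorem \ref{thm:strongfiber}, $G$ algebraically fibers. Thus the real content has been offloaded to Section 5, and Theorem \ref{thm:vf} is an immediate specialization. The main obstacle, therefore, is entirely inside Theorem \ref{thm:strongfiber}: producing, from a single "extra" cohomology class $\gamma$ not pulled back from $Q$, a character on $G$ with finitely generated kernel. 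I expect that argument to rest on the openness of $\Sigma^1$ (Theorem \ref{thm:bnsopen}) plus the observation that the preimage in $G$ of a finite generating set of $Q$ together with a finite generating set of $H$ generates $\ker(\gamma + c)$ once $c$ is a generic large combination of the pulled-back classes — the finite generation of $H$ being what makes the "vertical" directions cost only bounded $\chi$-variation, while the $Q$-directions can be taken to cost unbounded $\chi$-variation of a definite sign.
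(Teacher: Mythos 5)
There is a genuine gap: you never actually prove that your perturbed character has finitely generated kernel, and both of the routes you offer for doing so fail. The direct route --- showing $\chi=\gamma+\sum_i n_i\alpha_i$ lies in $\Sigma^1(G)\cap-\Sigma^1(G)$ by a Cayley-graph connectivity argument on $G$ itself --- is only sketched, and the natural way to make it rigorous (perturbing a known fibered character using openness of $\Sigma^1(G)$, Theorem \ref{thm:bnsopen}) has no anchor point: for $k\geq 2$ \emph{no} character of $G$ pulled back from $F_k$ has finitely generated kernel, since each such kernel contains an infinite-rank free subgroup of $F_k$; so there is no point of $\Sigma^1(G)\cap-\Sigma^1(G)$ known in advance to perturb from. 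The fallback route --- invoking Theorem \ref{thm:strongfiber} --- is circular: in the paper, Theorem \ref{thm:strongfiber} is \emph{deduced from} Theorem \ref{thm:vf} (its proof lifts to $H\rtimes F_n$, observes excessive homology there, and then cites Theorem \ref{thm:vf} to fiber it), so you cannot use it as the engine here.

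The idea you are missing is where the paper finds its anchor points: it works inside the rank-one sub-mapping-tori $G_i=H\rtimes\langle t_i\rangle$. There the exponent-sum character $\alpha_i|_{G_i}$ has kernel exactly $H$, which \emph{is} finitely generated, so $\pm\alpha_i|_{G_i}\in\Sigma^1(G_i)$, and openness of $\Sigma^1(G_i)$ applies: for $r$ large the perturbed characters $\beta_i=\alpha_i+\tfrac{1}{r}\gamma$ (with $\gamma$ normalized so that $\gamma(t_j)=0$ and $\gamma|_H$ is surjective) have finitely generated kernels $K_i\leq G_i$. The remaining work, which your sketch does not address, is to assemble these: the paper shows $N=\langle K_1,\dots,K_k\rangle$ is a finitely generated \emph{normal} subgroup of $G$ with $G/N\cong\ZZ$ (normality uses that $\beta_i|_H$ is surjective, so any $g\in G_i$ can be corrected by an element of $H$ into $N$, together with $H\triangleleft G_i$ and $K_j\triangleleft G_j$). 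That assembly step is the actual content of the theorem and needs to be supplied.
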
 
	
	\begin{proof}
		As noted above, excessive homology is equivalent to finding a homomorphism $\gamma\colon G\to \ZZ$ which has non-trivial image when restricted to $H$. Let $a_1, \dots, a_k$ be a generating set for $F_k$. Let $\alpha_i\colon G\to \ZZ$ be the exponent sum of $a_i$. By taking appropriate linear combinations of $\gamma$ and $\alpha_i$ we will assume that $\gamma(a_i) = 0$ for $a_1, \dots, a_k$ and that $\gamma|_H$ is surjective. 
		
		Let $G_i = H\rtimes \langle a_i\rangle$. 
		Let $r$ be an integer to be fixed later. Consider the homomorphism $\beta_i = \alpha_i + \frac{1}{r}\gamma\colon G_i\to \QQ$.  Since the image is finitely generated it is cyclic. Assume $r$ is large enough so that $\ker(\beta_i)$ is finitely generated by Theorem \ref{thm:bnsopen}. Since $\frac{1}{r}\neq 0$ we see that $\beta_i|_H$ is non-zero. By picking $r$ large enough we can ensure that $\beta_i|_H$ is surjective onto the image of $\beta_i$. 
		
		Let $N = \langle \cup_{i=1}^k\ker(\beta_i)\rangle$. This is a finitely generated subgroup of $G$. One should note that this is the subgroup constructed in the proof of Theorem \ref{thm:incohwithhomology}. We will show that $N$ is normal and $G/N$ is infinite cyclic. 
		
		To see that $N$ is normal let $g\in G_i$ and $w\in \ker(\beta_j)$ and consider $gwg^{-1}$. Since $\beta_i|_H$ is surjective we can find $h\in H$ such that $gh\in N$. Note that $gwg^{-1} = ghh^{-1}whh^{-1}g^{-1}$, and since $gh\in N$, we are left to prove that $h^{-1}wh\in N$. Since $H\triangleleft G_i$ for each $i$ and $\ker(\beta_j)\triangleleft G_j$, we can conclude that $h^{-1}wh\in \ker(\beta_j)$ and that $N\triangleleft G$. 
		
		Since $\beta_i|_H$ is surjective we see that every element of $G/N$ is equivalent to an element in $H/(H\cap N)$ which by the first isomorphism theorem is $\ZZ$.  Thus $G$ is virtually fibered. 
	\end{proof}

	In fact, excessive homology is exactly the condition needed for fibering and we obtain the following. 
	
	\begin{theorem}
		Let $G = H\rtimes F_k$. Suppose that $H$ is finitely generated. Then $G$ virtually algebraically fibers if and only if $G$ has virtually excessive homology. 
	\end{theorem}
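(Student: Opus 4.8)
The plan is to prove the two implications separately, assuming throughout that $k\ge 2$. First I would record how the structure of $G$ passes to finite-index subgroups, since this is exactly what makes ``virtually excessive homology'' a meaningful condition: if $G'\le G$ has finite index, then $H':=G'\cap H$ has finite index in $H$, hence is finitely generated, while $G'/H'$ is isomorphic to the image of $G'$ in $F_k$, a finite-index subgroup of $F_k$ and so free of some rank $l\ge 2$ (by Nielsen--Schreier, using $k\ge 2$); as $F_l$ is free the sequence $1\to H'\to G'\to F_l\to 1$ splits, so $G'\cong H'\rtimes F_l$. Thus for every finite-index $G'\le G$ the condition ``$G'$ has excessive homology'', i.e.\ $rk(H^1(G';\RR))\ge l+1$, is well defined, and ``$G$ has virtually excessive homology'' means that some such $G'$ satisfies it.

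For the backward implication this is now immediate: given a finite-index subgroup $G'=H'\rtimes F_l\le G$ with excessive homology and $H'$ finitely generated, Theorem~\ref{thm:vf} applied to $G'$ shows that $G'$ algebraically fibers, so $G$ virtually algebraically fibers.

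For the forward implication, suppose a finite-index subgroup $G'=H'\rtimes F_l$ of $G$ algebraically fibers, say $\chi\colon G'\twoheadrightarrow\ZZ$ has finitely generated kernel. The key point is that $\chi|_{H'}\ne 0$. Indeed, if $\chi|_{H'}=0$ then $\chi$ factors as $\bar\chi\circ p$, where $p\colon G'\to F_l$ is the quotient by $H'$ and $\bar\chi\colon F_l\to\ZZ$ is a nonzero homomorphism, so that $\ker\chi=H'\rtimes\ker\bar\chi$; but $F_l$ with $l\ge 2$ does not algebraically fiber, so $\ker\bar\chi$ is infinitely generated, and since $\ker\chi$ surjects onto $\ker\bar\chi$ it cannot be finitely generated --- a contradiction. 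Given $\chi|_{H'}\ne 0$, the class of $\chi$ in $H^1(G';\RR)$ does not lie in the image of the injection $p^{*}\colon H^1(F_l;\RR)\hookrightarrow H^1(G';\RR)$, whose image is precisely the space of classes vanishing on $H'$; hence $rk(H^1(G';\RR))\ge l+1$, i.e.\ $G'$ has excessive homology, and therefore $G$ has virtually excessive homology.

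The only genuine obstacle is the forward-direction claim that a fibered character of a finite-index subgroup cannot vanish on $G'\cap H$, and this reduces cleanly to the fact recalled in Section~\ref{sec:fiber} that non-abelian free groups do not algebraically fiber, together with the trivial remark that an extension of an infinitely generated group by a finitely generated group is infinitely generated. Everything else is the routine bookkeeping of the first paragraph plus a direct invocation of Theorem~\ref{thm:vf}; I anticipate no hidden subtleties beyond being mildly careful that the ``$H$-by-free'' structure, and hence the notion of excessive homology, is genuinely inherited by all finite-index subgroups, which is the content of the first step.
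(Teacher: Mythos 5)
Your proposal is correct and follows essentially the same route as the paper: finite-index subgroups inherit the $H'\rtimes F_l$ structure, the backward direction is a direct appeal to Theorem~\ref{thm:vf}, and the forward direction rests on the observation that a character vanishing on $H'$ factors through $F_l$ and hence has infinitely generated kernel (the paper phrases this as a contradiction from non-excessive homology, you phrase it contrapositively, but the argument is the same). Your explicit remark that $k\ge 2$ is needed is a worthwhile clarification the paper leaves implicit.
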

\begin{proof}
	Let $K\leq G$ be a finite index subgroup. Then we get a homomorphism $\pi\colon K\to F_k$ which has finite index image $F_l$. We can now write $K$ as $L\rtimes F_l$, where $L = K\cap H$ is a finite index subgroup of $H$ and hence finitely generated. 
	
	If $K$ has excessive homology, then we can appeal to Theorem \ref{thm:vf}. 
	
	For the other direction suppose that $K$ virtually fibers. Assume for a contradiction that $rk(H_1(K;\ZZ)) = l$. Then every homomorpshism $\phi\colon K\to \ZZ$ factors through $F_l$. Thus we get a surjection from $\ker(\phi)\to \ker(\phi')$, where $\phi = \phi'\circ\pi$.  The latter kernel is infinitely generated, since $F_l$ does not algebraically fiber.  Thus $\ker(\phi)$ is infinitely generated and $K$ cannot fiber. 
\end{proof}

In fact, we can use the subgroup in Theorems \ref{thm:incohwithhomology}, and \ref{thm:vf}, to find algebraic fibers in a wider class of groups. 
\begin{theorem} \label{thm:strongfiber}
	
	Suppose that $G$ fits into a short exact sequence  $$ 1 \rightarrow H \rightarrow G \rightarrow Q \rightarrow 1$$  where $H$ is finitely generated, $Q$ has rank $n$, and $rk( H^1(Q; \RR)) =n$.  If $rk(H^1(G; \RR)) >n$, then $G$ algebraically fibers. 
	\end{theorem}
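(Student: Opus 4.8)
The plan is to adapt the proof of Theorem \ref{thm:vf} almost verbatim, with the exponent-sum characters of the free quotient replaced by a dual basis of characters of $Q$. The first step is to record two homological facts. Since $Q$ is generated by $n$ elements $x_1,\dots,x_n$ and $rk(H^1(Q;\RR)) = n$, the abelian group $H_1(Q;\ZZ)$ is generated by $n$ elements and has free rank $n$; hence it is torsion-free, isomorphic to $\ZZ^n$, and $[x_1],\dots,[x_n]$ is a $\ZZ$-basis. This yields characters $\sigma_1,\dots,\sigma_n\colon Q\to\ZZ$ with $\sigma_i(x_j) = \delta_{ij}$, which pull back along $G\twoheadrightarrow Q$ to characters $\alpha_i\colon G\to\ZZ$ with $\alpha_i|_H = 0$. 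It also shows every $x_i$ has infinite order, so that if we fix lifts $\tilde x_i\in G$ the subgroup $G_i := \langle H,\tilde x_i\rangle$ equals $H\rtimes\langle\tilde x_i\rangle \cong H\rtimes\ZZ$ with $H$ finitely generated.

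Next I invoke the excessive homology hypothesis. The inflation map $H^1(Q;\RR)\hookrightarrow H^1(G;\RR)$ has image the characters vanishing on $H$, so since $rk(H^1(G;\RR)) > n$ there is an integral character $\gamma\colon G\to\ZZ$ with $\gamma|_H\neq 0$. Subtracting an integral combination of the $\alpha_i$ (which does not change $\gamma|_H$) I may assume $\gamma(\tilde x_i) = 0$ for all $i$. For a large integer $r$ to be chosen, set $\beta_i := (\alpha_i + \tfrac1r\gamma)|_{G_i}\colon G_i\to\QQ$; its image is finitely generated, hence cyclic. Because $G_i\cong H\rtimes\ZZ$ with $H$ finitely generated, both $\alpha_i|_{G_i}$ and $-\alpha_i|_{G_i}$ lie in $\Sigma^1(G_i)$, so by Theorem \ref{thm:bnsopen} $\ker(\beta_i)$ is finitely generated once $r$ is large, and (enlarging $r$ if necessary) $\beta_i|_H$ surjects onto the image of $\beta_i$. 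Put $N := \langle\bigcup_{i=1}^n\ker(\beta_i)\rangle$; this is a finitely generated subgroup of $G$ --- the same subgroup featuring in Theorems \ref{thm:incohwithhomology} and \ref{thm:vf}.

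The remaining task is to show $N\trianglelefteq G$ with $G/N\cong\ZZ$; then $G\to G/N$ is the desired algebraic fibering. Normality is exactly the argument of Theorem \ref{thm:vf}: conjugation by an element of $H$ preserves each $\ker(\beta_j)$ since $H\trianglelefteq G_j$, and for a generator $\tilde x_i$ we use surjectivity of $\beta_i|_H$ to write $\tilde x_i = (\tilde x_i h)h^{-1}$ with $\tilde x_i h\in\ker(\beta_i)\subseteq N$ and $h\in H$, so that $\tilde x_i w\tilde x_i^{-1} = (\tilde x_i h)(h^{-1}wh)(\tilde x_i h)^{-1}\in N$ for $w\in\ker(\beta_j)$. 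For the quotient, choose one $h_0\in H$ with $\gamma(h_0) = r$; then $h_0\tilde x_i^{-1}\in\ker(\beta_i)\subseteq N$, so $\tilde x_i\equiv h_0\pmod N$ for every $i$, and hence every element of $G$ is congruent mod $N$ to an element of $H$. Thus $H\hookrightarrow G$ induces a surjection $H/(H\cap N)\to G/N$, and it remains to show $H\cap N = \ker(\gamma|_H)$. One inclusion is clear; for the other, write a general element of $N$ in normal form $\prod_k(h_k\tilde x_{i_k}^{m_k})$ with $h_k\tilde x_{i_k}^{m_k}\in\ker(\beta_{i_k})$, so that $\gamma(h_k) = -r m_k$. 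If this product lies in $H$, its image in $H_1(Q;\ZZ)\cong\ZZ^n$ is zero, forcing $\sum_{k:\,i_k = j}m_k = 0$ for each $j$, and therefore $\gamma$ of the product equals $-r\sum_k m_k = 0$. Hence $H\cap N = \ker(\gamma|_H)$, so $G/N\cong H/\ker(\gamma|_H)\cong\ZZ$.

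I expect the main obstacle to be precisely the computation of $H\cap N$: the subgroup $N$ is only given as generated by the $\ker(\beta_i)$, and a priori it could meet $H$ in more than $\ker(\gamma|_H)$, in which case $G/N$ would be finite and carry no fibering. The normal-form argument above is what rules this out, and it is the only place where the hypothesis on $Q$ --- that $[x_1],\dots,[x_n]$ form a basis of a torsion-free $H_1(Q;\ZZ)$, so that triviality in $Q$ forces the $\tilde x_i$-exponents to sum to zero --- is genuinely used; otherwise the proof is formally identical to that of Theorem \ref{thm:vf}. An alternative, which I would mention, is to pass to the fibre product $\widehat G = G\times_Q F_n$ (with $F_n\twoheadrightarrow Q$ sending a free basis to the $x_i$), a split extension $H\rtimes F_n$ that surjects onto $G$ and hence has excessive homology; applying Theorem \ref{thm:vf} to $\widehat G$ and checking that $\ker(\widehat G\twoheadrightarrow G)$, which lies in the free section and has total exponent sum zero, is contained in the subgroup $N$ produced there, one sees the fibering descends to $G$ --- but this still needs the same exponent-sum observation and is no shorter.
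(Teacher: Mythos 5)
Your proof is correct, but it takes a genuinely different (more self-contained) route than the paper's. The paper does not re-run the BNS perturbation argument inside $G$: it forms the auxiliary split extension $H\rtimes F_n$ (the pullback of $G\to Q$ along $F_n\twoheadrightarrow Q$, $t_i\mapsto x_i$), lifts $\gamma$ to see that $H\rtimes F_n$ has excessive homology, applies Theorem \ref{thm:vf} to obtain a fibration $\hat p$ with finitely generated kernel $K$, and then uses $rk(H^1(Q;\RR))=n$ to descend $\hat p$ to a character $p$ of $G$ with $\ker p$ the image of $K$, hence finitely generated. This is exactly the ``fibre product'' alternative you sketch in your final paragraph. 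The paper's reduction buys brevity: since $N=\ker p$ by construction, the identification $G/N\cong\ZZ$ is automatic and no analogue of your computation $H\cap N=\ker(\gamma|_H)$ is needed. Your internal argument buys explicitness at precisely the point the paper leaves implicit: your observation that $H_1(Q;\ZZ)\cong\ZZ^n$ with $[x_1],\dots,[x_n]$ as a basis (so the dual characters $\sigma_i$ exist and relators of $Q$ have vanishing exponent sums) is what justifies the paper's terse assertion that $p$ is well-defined by $p|_H=\hat p|_H$, $p(y_i)=\hat p(t_i)$. One simplification you could make: the single character $p:=\tfrac1r\gamma+\sum_i\alpha_i$ of $G$ restricts to $\beta_i$ on each $G_i$, so $N\subseteq\ker p$; combined with $\ker(\gamma|_H)\subseteq N$ and the surjectivity of $H\to G/N$ that you already establish, this yields $N=\ker p$ and $G/N\cong\ZZ$ without the normal-form bookkeeping.
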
 
	 
	 \begin{proof} Denote the surjective map $G \rightarrow Q$ by $\phi$. Let $\lbrace y_1,...y_n \rbrace$ be elements of $G$ such that $\{\phi(y_i)\}$ is a generating set for $Q$.  Let $F_n$ be the free group $\langle t_1, ... t_n \rangle$.  Since $H$ is normal, $y_iH y_i^{-1}=H$, so each $y_i$ induces an automorphism of $H$.     Let $H \rtimes F_n$ be the semi-direct product where conjugation by $t_i$, induces the same automorphism of $H$ as $y_i$ acting on $H$ in $G$.  Then we have the following commutative diagram:  
	 
	\[
	 \begin{tikzcd}
  1 \arrow[r] & H \arrow[d] \arrow[r] & H \rtimes F_n \arrow[d] \arrow[r] & F_n\arrow[d, "f"] \arrow[r] & 1 \\
  1 \arrow[r] & H \arrow[r] & G \arrow[r, "\phi"] & Q \ar[r] & 1
\end{tikzcd}
	 \]
	
	 Now since $rk(H^1(G, \RR)) >n$, there is a map $\gamma\colon G \rightarrow \ZZ$ that does not factor through $Q$.  Since $rk (H^1(Q; \RR)) =n$, we can use linear combinations of maps to $\ZZ$ that do factor through $Q$ (where  $y_i$ maps to 1) to assure that $\gamma|H $ is onto and each $\gamma(y_i) =0$.  This is not necessary but it will make it easier to see the fiber. The map $\gamma\colon G \rightarrow \ZZ$ lifts to a map $\hat \gamma\colon H \rtimes F_n$ where $\hat \gamma|_H = \gamma|_H$ and $\hat \gamma(t_i) = \gamma(y_i) =0$.  Then, $H\rtimes F_n$ has excessive homology and by Theorem \ref{thm:vf}, we have that $H \rtimes F_n$ algebraically fibers.   Let $\hat p\colon H \rtimes F_n \rightarrow \ZZ$ be any algebraic fibration.   Since $rk (H^1(Q; \RR)) =n$, we can construct the map $p\colon G \rightarrow \ZZ$ defined by  $p|_H = \hat p|_H$ and $p(y_i) = \hat p(t_i)$.  The map $\hat p$ factors through $G$ and is equal to $p\circ f$, as can be seen by the definition on generators.   Then we have the following commutative diagram. 
	 
	 \[
	 \begin{tikzcd}
  1 \arrow[r] & K \arrow[d] \arrow[r] & H \rtimes F_n \arrow[d] \arrow[r, "\hat p"] & \ZZ \arrow[r] & 1 \\
  1 \arrow[r] & N \arrow[r] & G \arrow[ur, "p"] \\
\end{tikzcd}
	 \]
	 
	 Since $K \rightarrow N$ is onto, and $K$ is finitely generated, $N$ is finitely generated and $G$ algebraically fibers. 

		 \end{proof} 
	
We note that the class of groups $Q$ satisfying the above is large.  For example, free groups, surface groups, and right-angled Artin groups all could be used for $Q$.  
		
	\section{Free of rank 2 by free is incoherent. } \label{sec:F2}

	\begin{theorem} \label{thm:f2fn} 
		Let $G = F_2\rtimes F_n$, where $n \geq 2$. Then $G$ is incoherent. Moreover, $G$ virtually algebraically fibers. 
	\end{theorem}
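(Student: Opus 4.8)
The plan is to exhibit a finite-index subgroup of $G$ with excessive homology, and then to invoke Theorem~\ref{thm:excessive} for incoherence and Theorem~\ref{thm:vf} for algebraic fibering. Write $\rho\colon F_n\to\Aut(F_2)$ for the homomorphism defining $G=F_2\rtimes_\rho F_n$. First I would fix the characteristic subgroup $N=\ker\big(F_2\twoheadrightarrow H_1(F_2;\ZZ/2)\big)$, of index $4$; by Nielsen--Schreier it is free of rank $5$. Let $\Delta=F_2/N\cong(\ZZ/2)^2$ be the deck group, acting by conjugation on $H_1(N;\QQ)$. Since $\QQ[\Delta]$ is a product of four copies of $\QQ$, Shapiro's lemma gives an isotypic decomposition $H_1(N;\QQ)=\bigoplus_{\psi\in\widehat\Delta}V_\psi$ with $V_\psi\cong H_1(F_2;\QQ_\psi)$, where $\QQ_\psi$ is $\QQ$ with $F_2$ acting through $\psi$. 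Here $V_1\cong H_1(F_2;\QQ)$ has dimension $2$, while for each of the three nontrivial characters $\psi$ a direct computation with the two-generator chain complex of $F_2$ gives $H_1(F_2;\ZZ_\psi)\cong\ZZ$, so $\dim V_\psi=1$. Thus $W:=\bigoplus_{\psi\neq1}V_\psi$ is a three-dimensional $\QQ$-subspace of $H_1(N;\QQ)$, complementary to $V_1$.

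The heart of the matter is to control how $\Aut(F_2)$ respects this picture. Let $\Gamma\leq\Aut(F_2)$ be the finite-index subgroup of automorphisms that act trivially on $H_1(F_2;\ZZ/2)$. Any $\phi\in\Gamma$ preserves $N$ and induces the identity on $\Delta$, so the automorphism it induces on $H_1(N;\QQ)$ commutes with the $\Delta$-action; hence it preserves $V_1$ and each $V_\psi$, and since for $\psi\neq1$ it also preserves the rank-one group $H_1(F_2;\ZZ_\psi)\cong\ZZ$ it acts on $V_\psi$ through $\Aut(\ZZ)=\{\pm1\}$. This produces homomorphisms $\lambda_\psi\colon\Gamma\to\{\pm1\}$, and I would set $\Gamma'=\bigcap_{\psi\neq1}\ker\lambda_\psi$, a subgroup of $\Gamma$ of index at most $8$, hence of finite index in $\Aut(F_2)$, each of whose elements acts as the identity on $W$ while still commuting with $\Delta$.

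Now put $F_l=\rho^{-1}(\Gamma')\leq F_n$. Since $\Gamma'$ has finite index in $\Aut(F_2)$, $F_l$ has finite index in $F_n$; being a nontrivial subgroup of $F_n$ with $n\geq2$, it is free of some rank $l\geq2$. As $N$ is characteristic, $N\rtimes F_l$ is a finite-index subgroup of $G$, and each generator of $F_l$ acts on $H_1(N;\RR)$ commuting with $\Delta$, preserving $V_1\otimes\RR$, and fixing $W\otimes\RR$ pointwise; consequently the corresponding operator $\Phi_i-I$ on $H_1(N;\RR)$ has image inside the proper subspace $V_1\otimes\RR$. By the cohomological form of Lemma~\ref{lem:homologyoffreebyfree} this forces $rk\,H^1(N\rtimes F_l;\RR)\geq l+\dim(W\otimes\RR)=l+3>l+1$, so $N\rtimes F_l$ has excessive homology. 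Finally, $N$ is a free group of rank $5\geq2$ and so does not algebraically fiber, so Theorem~\ref{thm:excessive} applied to $N\rtimes F_l$ shows that this group, and hence $G$, is incoherent, and Theorem~\ref{thm:vf} shows that $N\rtimes F_l$ algebraically fibers, and hence that $G$ virtually algebraically fibers.

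The step carrying the real content is the claim in the second paragraph that an automorphism of $F_2$ lying over the principal congruence subgroup $\Gamma(2)\leq GL_2(\ZZ)$ acts through $\{\pm1\}$ on each one-dimensional twisted piece $V_\psi$; this rests on such an automorphism being trivial modulo $2$, so that its action on $H_1(N;\QQ)$ commutes with the deck action, together with the elementary fact $H_1(F_2;\ZZ_\psi)\cong\ZZ$ for $\psi\neq1$. Everything else is bookkeeping with finite-index subgroups plus the machinery of Sections~\ref{sec:fiber}--\ref{sec:Hbyfree}. Note that the argument does not care whether $\rho$ induces an injection $F_n\hookrightarrow\Out(F_2)$, so no separate treatment of the degenerate case is needed.
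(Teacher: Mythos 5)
Your proof is correct, and while it follows the same overall skeleton as the paper's (produce a finite-index subgroup of the form (finite-index subgroup of $F_2$)$\rtimes F_l$ with excessive homology, then invoke Theorems \ref{thm:incohwithhomology}/\ref{thm:excessive} and \ref{thm:vf}), the way you manufacture that subgroup is genuinely different. The paper passes to the index-$2$ subgroup $H=\langle a,b^2,bab^{-1}\rangle$ of rank $3$, which is not characteristic but is stabilized by an explicit index-$3$ subgroup of $\Out(F_2)$, and then verifies excessive homology by writing down the four $3\times 3$ matrices $\Phi$ for explicit generators of that stabilizer and checking by hand that the images of the $\Phi-I$ span only a $2$-dimensional subspace of $\ZZ^3$. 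You instead use the characteristic mod-$2$ kernel $N$ of index $4$ and rank $5$, split $H_1(N;\QQ)$ into isotypic pieces under the deck group $(\ZZ/2)^2$ via Shapiro's lemma, and note that automorphisms trivial mod $2$ preserve each piece and can only act by a sign on the three one-dimensional twisted pieces; killing those signs by one further finite-index pass forces a $3$-dimensional coinvariant subspace with no matrix computation at all. Your route costs a larger index but buys three things: it is computation-free; it absorbs the case where $F_n\to\Out(F_2)$ is not injective (since $\rho^{-1}(\Gamma')$ is finite index regardless), whereas the paper treats that case separately via the $F_2\times F_2$ lemma; and it points toward the program sketched at the end of Section \ref{sec:F2} for $F_k\rtimes F_l$ with $k>2$, where one could try the mod-$p$ congruence kernel in place of $N$. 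The only point to spell out a little more carefully is why the sign characters $\lambda_\psi$ are well defined: you need $H_1(N;\ZZ)\cap V_\psi$ to be a nonzero rank-one lattice, which holds because $V_\psi$ is a rational subspace (cut out by the idempotent $\tfrac14\sum_{d\in\Delta}\psi(d)d$) of the $\QQ$-span of the lattice $H_1(N;\ZZ)$; with that noted, the argument is complete.
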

	\begin{proof}
		Let $F_2$ be the free group generated by $a, b$, let $G = F_2\rtimes F_n$ be a free-by-free group. We have seen that if the natural map $F_n\to \mathrm{Aut}(F_2)$ is not injective, then $G$ contains $F_2\times F_2$ and so is incoherent. Thus we will assume that $F_n\to\mathrm{Aut}(F_2)$ is injective. 
		
		Let $H$ be the subgroup of $F_2$ generated by $x:= a, y:=b^2, z:= bab^{-1}$. This subgroup is not characteristic however it is preserved by an index 3 subgroup of $\mathrm{Aut}(F_2)$. Define two automorphisms of $F_2$ by:  
		\begin{align*}
		\lambda\colon a&\mapsto ab &\rho\colon a&\mapsto a\\
		b&\mapsto b &b&\mapsto ba
		\end{align*}
		
		The automorphisms $\lambda, \rho$ generate $\mathrm{Out}(F_2) = SL_2(\ZZ)$. There are three index two subgroups of $F_2$ and this set is preserved by any automorphism.  Thus we can consider the action of $\lambda$ and $\rho$ on this set and determine the stabilizer of any one of these subgroups in terms of these generators. By inspection, the subgroup $H$ is preserved by $\lambda^2, \rho, \lambda\rho^2\lambda^{-1}$ and $\lambda\rho\lambda\rho^{-1}\lambda^{-1}$. These four elements generate an index 3 subgroup of $\mathrm{Out}(F_2)$. 
		
		Thus we will pass to a finite index subgroup of $G$ which is of the form $G_1 = H\rtimes F$, where $F$ is a free group. The action of $F$ on $H$ is the restriction of the original action (a subgroup of $\Aut(F_2)$)  intersect the index 3 subgroup above. 
		
		We can compute the homology of $G_1$ from  \Cref{lem:homologyoffreebyfree}. So we must compute the matrices corresponding to elements of $F$. We will in fact compute the matrices for the generators of the index 3 subgroup above. 
		We arrive at 
		\begin{align*}
		\Phi_{\lambda^2} &=\left(\begin{matrix}
		1&0&0\\
		1&1&1\\
		0&0&1
		\end{matrix}\right) &\Phi_{\rho} &=\left(\begin{matrix}
		1&1&0\\
		0&1&0\\
		0&1&1
		\end{matrix}\right)\\
		\Phi_{\lambda \rho^2 \lambda^{-1}} &=\left(\begin{matrix}
		0&2&-1\\
		-1&3&-1\\
		-1&2&0
		\end{matrix}\right) &\Phi_{\lambda \rho\lambda\rho^{-1}\lambda^{-1}} &=\left(\begin{matrix}
		2&-1&1\\
		2&-1&2\\
		1&-1&2
		\end{matrix}\right)
		\end{align*}
		
		After taking away the identity matrix from each of the above we can compute the span. The spans of  $\Phi_{\lambda^2} - I$, $\Phi_{\rho} -I$, $\Phi_{\lambda \rho^2 \lambda^{-1}} -I$, and $\Phi_{\lambda \rho\lambda\rho^{-1}\lambda^{-1}} -I$, respectively, are generated by $\begin{bmatrix}
		0 \\
		1 \\
		0\\
		
		\end{bmatrix}$,  $\begin{bmatrix}
		1 \\
		0 \\
		1\\
		
		\end{bmatrix}$, $\begin{bmatrix}
		1 \\
		1 \\
		1\\
		
		\end{bmatrix}$, and  $\begin{bmatrix}
		1 \\
		2 \\
		1\\
		
		\end{bmatrix}$, respectively.  Thus we see that $x$ has infinite order  in the abelianization. 
		
		We can now apply Theorem \ref{thm:incohwithhomology} and see that $G_1$ is incoherent and hence $G$ is incoherent.  We apply Theorem \ref{thm:vf} to see that $G$ virtually algebraically fibers. 
	\end{proof}
	
	If we have a group of the form $G = F_2\rtimes F_n$ where the natural map $F_n\to\mathrm{Out}(F_2)$ is injective, then $G$ embeds in $\mathrm{Aut}(F_2)$. Indeed, $F_2$ is a subgroup of $\Aut(F_2)$. This provides an alternative proof that $\mathrm{Aut}(F_2)$ is incoherent. This was originally proved by Cameron Gordon in \cite{Gordoncoherence}.

	\begin{cor} \cite{Gordoncoherence} 
		$\Aut(F_2)$ is incoherent.
	\end{cor}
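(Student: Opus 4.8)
The plan is to realize an incoherent free-by-free group as a subgroup of $\Aut(F_2)$ and then use the fact, recorded above, that any group containing an incoherent subgroup is itself incoherent. I would start from the natural short exact sequence
\[ 1 \longrightarrow \mathrm{Inn}(F_2) \longrightarrow \Aut(F_2) \stackrel{\pi}{\longrightarrow} \Out(F_2) \longrightarrow 1 , \]
and recall that $\mathrm{Inn}(F_2)\cong F_2$, because $F_2$ is centerless, while $\Out(F_2)\cong GL_2(\ZZ)$ is virtually a non-abelian free group. Hence $\Out(F_2)$ contains a free subgroup $\Gamma\cong F_n$ with $n\geq 2$; one explicit choice is the image under $SL_2(\ZZ)\hookrightarrow\Out(F_2)$ of the rank-two Sanov subgroup of $SL_2(\ZZ)$.

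Next I would pass to the preimage $G'=\pi^{-1}(\Gamma)\leq\Aut(F_2)$, which fits in an extension $1\to\mathrm{Inn}(F_2)\to G'\to\Gamma\to 1$. Since $\Gamma$ is free this extension splits, so $G'\cong F_2\rtimes F_n$ with $n\geq 2$; concretely, $G'$ is generated by $\mathrm{Inn}(F_2)$ together with any set of lifts to $\Aut(F_2)$ of a free basis of $\Gamma$. By construction the natural map $F_n=\Gamma\to\Out(F_2)$ attached to this semidirect product is the inclusion, hence injective, so $G'$ is exactly the kind of group handled by the homological part of the proof of Theorem \ref{thm:f2fn}, and this is what is meant by saying that such an $F_2\rtimes F_n$ embeds in $\Aut(F_2)$.

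Now Theorem \ref{thm:f2fn} applies directly to $G'$ and shows that $G'$ is incoherent; since $G'\leq\Aut(F_2)$, it follows that $\Aut(F_2)$ is incoherent. There is no real obstacle beyond Theorem \ref{thm:f2fn} itself: the remaining points are routine, namely that $\mathrm{Inn}(F_2)$ is free of rank $2$, that $\Out(F_2)$ contains a non-abelian free group, and that an extension with free quotient splits. If an explicit witness to incoherence is wanted, one can instead unwind the proof of Theorem \ref{thm:f2fn} inside $G'$ — pass to the index-three subgroup of $G'$ preserving the subgroup $H=\langle a,\,b^2,\,bab^{-1}\rangle$ of $F_2$, run the homology computation recorded there, and feed the resulting character into Lemma \ref{lem:construction}.
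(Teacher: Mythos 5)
Your proof is correct and is essentially the paper's argument: the paper embeds an $F_2\rtimes F_n$ with injective outer action into $\Aut(F_2)$ (via inner automorphisms together with lifts of the $F_n$), whereas you construct the same subgroup from the inside by pulling back a non-abelian free subgroup of $\Out(F_2)\cong GL_2(\ZZ)$ and splitting the resulting extension; both then conclude by Theorem \ref{thm:f2fn} and the fact that a group containing an incoherent subgroup is incoherent. The routine points you flag (centerlessness of $F_2$, virtual freeness of $GL_2(\ZZ)$, splitting over a free quotient) are all fine.
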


	In light of Theorem \ref{thm:f2fn}, we outline a strategy  for how one may attempt to prove all groups of the form $G = F_k\rtimes F_l$ are incoherent. There is a universal group of the form $F_k\rtimes F_n$ coming from any surjection $F_n\to \Out(F_k)$. If one could prove that this group has a finite index subgroup with excessive homology then all subgroups of the form $F_k\rtimes F_l$ will also have excessive homology in a finite index subgroup.  We note that $\Out(F_k)$ can be very different from $\Out(F_2)$ for large $k$. 
	
	\bibliographystyle{plain}

\noindent Robert Kropholler 

\noindent Mathematics M\"unster, WWU M\"unster

\noindent 48149 M\"unster, Germany

\noindent \url{robertkropholler@gmail.com} 
\vskip .2 in 

\noindent Genevieve Walsh 

\noindent Tufts University Mathematics Department 

\noindent Medford, MA USA 02155 	

\noindent \url{genevieve.walsh@tufts.edu} 
	
\end{document}